\theoremstyle{change}
\newtheorem{definition}{Definition:}[section]
\newtheorem{proposition}[definition]{Proposition:}
\newtheorem{theorem}[definition]{Theorem:}
\newtheorem{lemma}[definition]{Lemma:}
\newtheorem{corollary}[definition]{Corollary:}
{\theorembodyfont{\rmfamily}
\newtheorem{remark}[definition]{Remark:}
}
{\theorembodyfont{\rmfamily}

}
\newenvironment{proof}
  {{\bf Proof:}}
  {\qquad \hspace*{\fill} $\Box$}
\begin{document}

\title{Control sets of linear systems on Lie groups}
\author{Adriano Da Silva\thanks{%
Supported by Fapesp grant $n^{o}$ 2013/19756-8} \\
Instituto de Matem\'{a}tica\\
Universidade Estadual de Campinas\\
Cx. Postal 6065, 13.081-970 Campinas-SP, Brasil \and V\'{\i}ctor Ayala%
\thanks{%
Supported by Proyecto Fondecyt n$%
{{}^\circ}%
$ 1150292. Conicyt, Chile.} \\
Instituto de Alta Investigaci\'{o}n\\
Universidad de Tarapac\'{a}\\
Casilla 7D, Arica, Chile\\
and \and Guilherme Zsigmond \\
Universidad Cat\'{o}lica del Norte\\
Av. Angamos 0610, Antofagasta, Chile}
\date{\today }
\maketitle

\begin{abstract}
Like in the classical linear Euclidean system, we want to characterize for a
linear system on a connected Lie group $G$ its control set with nonempty
interior $\mathcal{C}$ that contains the identity of $G$. We show that many
topological properties of $\mathcal{C}$ are intrinsically connected with the
eigenvalues of the derivation $\mathcal{D}$ associated to the drift $%
\mathcal{X}$. In particular, we prove that if $G$ is a solvable Lie group or
if the Lie subgroup $G^{0}=\langle \exp (\mathfrak{g}^{0})\rangle $ is
compact, where $\mathfrak{g}^{0}=\bigoplus_{\alpha }\{ \mathfrak{g}_{\alpha
};\, \mathrm{Re}(\alpha )=0\}$, then $\mathcal{C}$ is the only control set
with nonempty interior of the linear system. Here $\mathfrak{g}_{\alpha }$
stands for the generalized eigenspace associated to the eigenvalue $\alpha $%
. Furthermore, for nilpotent Lie groups we characterize when the control set 
$\mathcal{C}$ is bounded. In particular, if $G$ is a nilpotent simply
connected Lie group, a linear system on $G$ admits a bounded control set
only if $\mathcal{D}$ is hyperbolic.
\end{abstract}

\bigskip

\textbf{Key words.} control sets, linear systems, Lie groups

\textbf{2010 Mathematics Subject Classification.} 16W25; 93B05; 93C05

\section{Introduction}

Throughout the paper $G$ stands for a connected Lie group with Lie algebra $%
\mathfrak{g}$ and dimension $d$. In \cite{VAJT} the authors introduced the
class of linear system on $G$, determined by the family of differential
equations%
\begin{equation}
\Sigma :\dot{g}(t)=\mathcal{X}(g(t))+\sum_{j\text{ }=1}^{m}u_{j}(t)\text{ }%
X^{j}(g(t)),  \label{Linear system}
\end{equation}%
here the drift $\mathcal{X}$ with flow $(\varphi _{t})_{t\in \mathbb{R}}$ is
called a linear vector field which is associated to a $\mathfrak{g}$%
-derivation $\mathcal{D}$ through the formula 
\begin{equation*}
(d\varphi _{t})_{e}=\mathrm{e}^{t\mathcal{D}}\; \; \; \mbox{ for all }\; \;
\;t\in \mathbb{R}.
\end{equation*}

The vector fields $X^{j}$ are right invariant and $u\in \mathcal{U}\subset
L^{\infty }(\mathbb{R},\Omega \subset \mathbb{R}^{m})$ is the class of
admissible controls where $\Omega \subset \mathbb{R}^{m}$ is a compact,
convex subset with $0\in \mathrm{int}\Omega $.

In \cite{JPh1} the author shows that this class of system is also important
in applications. Actually, it is shown there that an arbitrary affine
control systems on a manifold $M$ which dynamic generate a finite Lie
algebra is equivalent to an invariant control systems or a linear control
system on a homogeneous space.

On the other hand, it is very well known that the classical linear system on
the Euclidean space $\mathbb{R}^{d}$ is one of the most relevant control
systems and it can be written as 
\begin{equation*}
\dot{x}(t)=Ax(t)+\sum_{j\text{ }=1}^{m}u_{j}(t)\text{ }b^{j},\hspace{0.5cm}%
b^{j}\in \mathbb{R}^{d}\text{ and }u\in \mathcal{U}.
\end{equation*}%
Here $A\in \mathrm{gl}(d,\mathbb{R}),$ the Lie algebra of the real matrices
of order $d.$ Since $\mathbb{R}^{d}$ is commutative, any constant vector $%
b^{j}$ is an invariant vector field$.$ Just observe that $e^{tA}\in GL^{+}(d,%
\mathbb{R)}$ $=Aut(\mathbb{R}^{d})$. This fact gave the first insight in
order to generalize the notion of linear systems from $\mathbb{R}^{d}$ to $%
G. $ In fact, the flow $(\varphi _{t})_{t\in \mathbb{R}}$ of $\mathcal{X}$
is a $1$-parameter group of $G$-automorphisms, and $(\frac{d}{dt}%
)_{t=0}(\varphi _{t})$ coincides with the derivation $\mathcal{D}$\
associated to $\mathcal{X},$ like in the Euclidean case$.$ Of course the $%
b^{j}$ bi-invariant vector fields on $\mathbb{R}^{d}$ are the columns of the
classical cost matrix $B$.

It was shown in \cite{DaSilva1} that controllability of the linear systems $%
\Sigma $ on a Lie group is really an exceptional property. In fact, assume
that $G$ is nilpotent and the accessibility set $\mathcal{A}$ from the
identity element $e$ of $G$ is an open set. It turns out that 
\begin{equation*}
\Sigma \text{ is controllable on }G\Leftrightarrow Spec(\mathcal{D})\cap 
\mathbb{R}=\left \{ 0\right \} .
\end{equation*}%
Furthermore, recently the authors in \cite{DaAy} proved that $Spec(\mathcal{D%
})\cap \mathbb{R}=\left \{ 0\right \} $ implies controllability for any Lie
group with finite semisimple center, that is, any Lie group that admits a
maximal semisimple Lie subgroup with finite center.

To understand the controllability behavior of linear systems on Lie groups,
we need to approach the problem in a more realistic way. We turn our
attention to the maximal subsets of $G$ where controllability of the system
holds, means, the control sets.

Like in the classical linear system in this paper we characterize the
control set with nonemtpy interior of (\ref{Linear system}) around the
identity. As we were expected, many topological properties of $\mathcal{C}$
are intrinsically connected with the eigenvalues of the derivation $\mathcal{%
D}$ associated to the drift $\mathcal{X}$.

In particular, we prove that if $G$ is a solvable Lie group or if $G^{0}$ is
a compact subgroup 
\begin{equation*}
\mathcal{C}=\mathrm{cl}(\mathcal{A})\cap \mathcal{A}^{\ast }
\end{equation*}%
is the only control set of the linear system $\Sigma $. Here, $\mathcal{A}$
and $\mathcal{A}^{\ast }$ denote the reachable (controllable) set of the
identity element $e$ of $G$ respectively. Furthermore, 
\begin{equation*}
\mathcal{C}\text{ is bounded}\Rightarrow G^{0},\mathrm{cl}(\mathcal{A}%
_{G^{-}})\mbox{ and }\mathrm{cl}(\mathcal{A}_{G^{+}}^{\ast })\text{ are
compact sets},
\end{equation*}%
where $\mathcal{A}_{G^{-}}$ ($\mathcal{A}_{G^{+}}^{\ast }$) is the
intersection of $\mathcal{A}$ ($\mathcal{A}^{\ast }$) with the stable
(unstable) manifold of $\varphi $. For the nilpotent case the converse is
true. In particular, if $G$ is a nilpotent simply connected Lie group we
show that 
\begin{equation*}
\mathcal{C}\text{ is bounded }\Leftrightarrow \mathrm{cl}(\mathcal{A}%
_{G^{-}})\text{ and }\mathrm{cl}(\mathcal{A}_{G^{+}}^{\ast })\text{ are
compacts and }\mathcal{D}\text{ is hyperbolic.}
\end{equation*}

The paper is structured as follows: Section 2 introduces the notion of
affine control systems and its control sets on an arbitrary differentiable
manifold. We state here basic properties of control sets with nonempty
interior. Furthermore, we define linear vector field and linear systems.
Associated with any $\mathfrak{g}$-derivation there are several Lie algebras
and their corresponding Lie groups, connected with the reachable and
controllable sets of the system. We take care of this decomposition here. In
Section 3 we analyze the control sets of the linear system (\ref{Linear
system}). By a general result from \cite{FCWK}, it follows that around the
identity of $G$ there exists one of these possible sets. Then we focus our
attention on its properties. In particular, we establish necessary and
sufficient conditions to decide whenever this set is invariant. On the other
hand, we analyze under which circumstances the control set is the whole $G$
and when it is bounded. At the end of Section 3 we prove our main result. We
show that the control set around the identity is the only maximal set of
approximate controllability for Lie groups where we can decompose $G$ as the
product of the subgroups associated with the real parts of the eigenvalues
of $\mathcal{D}$.

\section{Preliminaries}

In this Section we state the definitions and main results concerning to
affine control system, control sets, linear vector field the associated
subalgebras and the corresponding subgroups. For more on the subjects the
reader could consult \cite{VASM}, \cite{VAJT}, \cite{FCWK}, \cite{DaSilva1}, 
\cite{JPh}, \cite{JPhDM} and \cite{JPh1}.

\subsection{Affine control systems and its control sets}

Let $M$ be a $d$ dimensional smooth manifold. By an \emph{affine control
system} in $M$ we understand the family of ordinary differential equations 
\begin{equation}
\dot{x}(t)=f^{0}(x(t))+\sum_{j\text{ }=1}^{m}u_{j}(t)\text{ }f^{j}(x(t)),\;
\; \; \;u=(u_{1},\ldots ,u_{m})\in \mathcal{U}  \label{controlsystem}
\end{equation}%
where $f^{j}\in X^{\infty }(M)$, $j=0,1,\ldots ,m$ are arbitrary $\mathcal{C}%
^{\infty }$ vector fields on $M.$

As usual, the family $\mathcal{U}\subset L^{\infty }(\mathbb{R},\Omega
\subset \mathbb{R}^{m})$ is the class of restricted admissible control
functions where $\Omega \subset \mathbb{R}^{m}$ with $0\in \mathrm{int}%
\Omega ,$ is a compact and convex set called the $control$\emph{\ }$range$
of the system.

Since our work focus on Lie groups we assume that the vector fields $%
f^{j},\,j=0,1,\ldots ,m$ are analytical. Therefore, for each control
function $u\in \mathcal{U}$ and each initial value $x\in M$ there exists an
unique solution $\phi (t,x,u)$ defined on an open interval containing $t=0,$
satisfying $\phi (0,x,u)=x$. Note that in general $\phi (t,x,u)$ is only a
solution in the sense of Carath\'{e}odory, i.e., a locally absolutely
continuous curve satisfying the corresponding differential equation almost
everywhere. Without lost of generality we assume that any solution can be
extended to the whole real line. Hence, we obtain a mapping 
\begin{equation*}
\phi :\mathbb{R}\times M\times \mathcal{U}\rightarrow M,\; \;
\;(t,x,u)\mapsto \phi (t,x,u),
\end{equation*}%
satisfying the \emph{cocycle property} 
\begin{equation*}
\phi (t+s,x,u)=\phi (t,\phi (s,x,u),\Theta _{s}u)
\end{equation*}%
for all $t,s\in \mathbb{R}$, $x\in M$, $u\in \mathcal{U}.$ Here for any $%
t\in \mathbb{R}$ the map $\Theta _{t}$ is the \emph{shift flow} on $\mathcal{%
U}$ defined by 
\begin{equation*}
(\Theta _{t}\text{ }u)(s):=u(t+s).
\end{equation*}%
Instead of $\phi (t,x,u)$ we usually write $\phi _{t,u}(x)$. Note that
smoothness of the vector fields $f^{0},$ $f^{1},\ldots ,f^{m}$ implies the
smoothness of $\phi _{t,u}$. Moreover, it follows directly from the cocycle
property that

\begin{itemize}
\item[$i)$] the inverse of the diffeomorphism $\phi _{t,u}$ exists and it is
given by $\phi _{-t,\Theta _{t}u}$

\item[$ii)$] the fact that for any $t>0,\phi _{t,u}(g)$ depends just on $%
u|_{[0,t]}$ implies that 
\begin{equation*}
\phi _{t,u_{1}}(\phi _{s,u_{2}}(g))=\phi _{t+s,u}(g)
\end{equation*}%
where $u\in \mathcal{U}$ is defined through concatenation by 
\begin{equation*}
u(\tau )=\left \{ 
\begin{array}{c}
u_{1}(\tau )\text{ for }\tau \in \lbrack 0,s] \\ 
u_{2}(\tau -s)\text{ for }\tau \in \lbrack s,t+s].%
\end{array}%
\right.
\end{equation*}%
For any $x\in M$ and $\tau >0$ the sets 
\begin{equation}
\begin{array}{l}
\mathcal{A}_{\leq \text{ }\tau }(x):=\{y\in M:\exists u\in \mathcal{U},\text{
}t\in \lbrack 0,\tau ]\text{ with}\; \;y=\phi _{t,u}(x)\} \; \; \\ 
\\ 
\mathcal{A}_{\tau }(x):=\{y\in M:\exists u\in \mathcal{U},\; \;y=\phi _{\tau
,u}(x)\} \\ 
\\ 
\mathcal{A}(x):=\bigcup_{\tau \text{ }>\text{ }0}\mathcal{A}_{\leq \text{ }%
\tau }(x),%
\end{array}
\label{reachablesets}
\end{equation}%
are the \emph{set of reachable point from }$x$\emph{\ up to time }$\tau $,
the \emph{set of reachable points from }$x$\emph{\ at time }$\tau $ and the 
\emph{reachable set of}\textbf{\ }$x$, respectively. In the same way, for
any $\; \tau >0$ the sets 
\begin{equation}
\begin{array}{l}
\mathcal{A}_{\leq \text{ }\tau }^{\ast }(x):=\{y\in M:\exists u\in \mathcal{U%
},t\in \lbrack 0,\tau ];\; \; \phi _{t,u}(y)=x\} \\ 
\\ 
\mathcal{A}_{\tau }^{\ast }(x):=\{y\in M:\exists u\in \mathcal{U},\;
\;y=\phi _{\tau ,u}(x)\} \\ 
\\ 
\mathcal{A}^{\ast }(x):=\bigcup_{\tau \text{ }>\text{ }0}\mathcal{A}_{\leq
\tau }^{\ast }(x),%
\end{array}
\label{controllablesets}
\end{equation}%
are called the \emph{set of controllable points to }$x$\emph{\ within time }$%
\tau $,\emph{\ the set of controllable points to }$x$\emph{\ in time }$\tau $%
\emph{\ and the controllable set of }$x$, respectively.
\end{itemize}

\begin{definition}
We say that the system (\ref{controlsystem})
\end{definition}

\begin{itemize}
\item[$i)$] is \emph{locally accessible at} $x$ if for all $\tau >0$ the
sets $\mathcal{A}_{\leq \text{ }\tau }(x)$ and $\mathcal{A}_{\leq \text{ }%
\tau }^{\ast }(x)$ have nonempty interior

\item[$ii)$] is \emph{locally accessible} if it is locally accessible at
every $x\in M$

\item[$iii)$] satisfy the \emph{Lie algebra rank condition} (LARC) if $%
\mathcal{L}(x)=T_{x}M$ for any $x\in M$, where 
\begin{equation*}
\mathcal{L}\text{ is the smallest }\mathfrak{g}\text{-subalgebra containing
any }f^{j}\text{, }j=0,1,\ldots ,m.
\end{equation*}%
Moreover, the system is locally accessible at $x\in M$ if it satisfies the
Lie algebra rank condition at the point $x$.
\end{itemize}

On the other hand, a more convenient approach to understand the
controllability behavior of a linear system comes from the notion of control
set.

\begin{definition}
A nonempty set $\mathcal{C}\subset M$ is called a \emph{control set} of the
affine system (\ref{controlsystem}) if it is
\end{definition}

\begin{itemize}
\item[$i)$] controlled invariant, that is, for every $x\in M$ there exists $%
u\in \mathcal{U}$ such that $\phi (\mathbb{R},x,u)\subset \mathcal{C}$

\item[$ii)$] approximate controllable, that is, $\mathcal{C}\subset \mathrm{%
cl}(\mathcal{A}(x))$ for every $x\in \mathcal{C}$

\item[$iii)$] is maximal with properties $(i)$ and $(ii)$
\end{itemize}

By Proposition 3.2.4 of \cite{FCWK}, a set $\mathcal{C}$ that is maximal
with respect to the property $(ii)$ above and satisfies $\mathrm{int}\,%
\mathcal{C}\neq \emptyset $ is a control set.

A control set $\mathcal{C}$ is said to be \textit{invariant in positive time}
if $\phi_{t, u}(\mathcal{C})\subset \mathcal{C}$ for any $t>0$ and $u\in%
\mathcal{U}$. In the same way, we say that $\mathcal{C}$ is \textit{%
invariant in negative time} if $\phi_{-t, u}(\mathcal{C})\subset \mathcal{C}$
for any $t>0$ and $u\in \mathcal{U}$.

The next proposition summarizes the main properties of control sets. Its
proof can be mainly found in \cite{FCWK} Theorem 3.1.5.

\begin{proposition}
\label{controlsets} Assume that the system (\ref{controlsystem}) is locally
accessible and let $\mathcal{C}$ be a control set with nonempty interior. It
holds
\end{proposition}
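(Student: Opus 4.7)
The plan is to establish the standard characterizations of a control set with nonempty interior: namely that $\mathrm{int}\,\mathcal{C}\subset \mathcal{A}(x)$ for every $x\in \mathrm{int}\,\mathcal{C}$, that $\mathcal{C}\subset \mathrm{cl}(\mathcal{A}(x))\cap \mathcal{A}^{\ast}(x)$ for such $x$, that in fact $\mathcal{C}=\mathrm{cl}(\mathcal{A}(x))\cap \mathcal{A}^{\ast}(x)$ for every $x\in \mathrm{int}\,\mathcal{C}$, and that $\mathrm{cl}(\mathcal{C})=\mathrm{cl}(\mathrm{int}\,\mathcal{C})$. All of these are essentially the content of Theorem 3.1.5 of \cite{FCWK}, so the proposal is to adapt those arguments to the present analytic setting where local accessibility is automatic from LARC.

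The first step is the key observation that, under local accessibility, the interior of $\mathcal{A}_{\leq \tau}(x)$ is nonempty for every $\tau>0$ and every $x\in M$, and moreover $\mathcal{A}(x)=\mathrm{cl}(\mathrm{int}\,\mathcal{A}(x))$ (this comes from the fact that trajectories starting in an open set sweep out an open set, together with the absence of any obstruction to concatenation). From here, given $x\in \mathrm{int}\,\mathcal{C}$, approximate controllability of the control set provides some $z\in \mathrm{int}\,\mathcal{C}\cap \mathrm{cl}(\mathcal{A}(x))$; but then $z$ is an interior point of $\mathcal{C}$ and $\mathcal{A}^{\ast}(z)$ has nonempty interior meeting $\mathcal{A}(x)$, so a concatenation argument promotes approximate reach to actual reach, yielding $\mathrm{int}\,\mathcal{C}\subset \mathcal{A}(x)$ and symmetrically $\mathrm{int}\,\mathcal{C}\subset \mathcal{A}^{\ast}(x)$.

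Once this is in place, the inclusion $\mathcal{C}\subset \mathrm{cl}(\mathcal{A}(x))\cap \mathcal{A}^{\ast}(x)$ for $x\in \mathrm{int}\,\mathcal{C}$ comes directly from approximate controllability in both time directions, and the reverse inclusion follows from maximality of $\mathcal{C}$: any $y\in \mathrm{cl}(\mathcal{A}(x))\cap \mathcal{A}^{\ast}(x)$ can be combined with $\mathcal{C}$ into a larger set that is still approximately controllable and controlled invariant, so $y\in \mathcal{C}$. The density statement $\mathrm{cl}(\mathcal{C})=\mathrm{cl}(\mathrm{int}\,\mathcal{C})$ will then follow because any $y\in \mathcal{C}$ is approximately reachable from $x\in \mathrm{int}\,\mathcal{C}$ through trajectories whose points eventually lie in the open set $\mathcal{A}(x)\cap \mathcal{A}^{\ast}(x)\subset \mathrm{int}\,\mathcal{C}$.

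The main obstacle I expect is keeping the maximality argument clean: one must verify that adding candidate points does not break controlled invariance, and that concatenation of trajectories (which is only guaranteed for positive time by the cocycle property) produces genuine admissible controls in $\mathcal{U}$. The compactness and convexity of $\Omega$ together with the concatenation formula spelled out in item $ii)$ of the preliminaries are exactly what is needed to make this rigorous, and local accessibility upgrades the topological conclusions from closures to interiors at each step.
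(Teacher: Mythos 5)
The paper itself offers no proof of this proposition: it is stated as a summary of known facts, with the justification outsourced to Theorem 3.1.5 (and Proposition 3.2.4) of \cite{FCWK}. Your reconstruction of the standard arguments for item 2 and for the density statement $\mathrm{cl}(\mathrm{int}\,\mathcal{C})=\mathrm{cl}(\mathcal{C})$ is essentially the right one. One point of care in your Step for $\mathrm{int}\,\mathcal{C}\subset\mathcal{A}(x)$: you need the open set $V\subset\mathrm{int}\,\mathcal{A}^{\ast}(y)$ that you intersect with $\mathcal{A}(x)$ to actually lie inside $\mathcal{C}$ (otherwise $V\cap\mathrm{cl}(\mathcal{A}(x))$ could be empty); this is obtained by running the local accessibility argument for trajectories that remain in the open set $\mathrm{int}\,\mathcal{C}$ for small time, which is the precise content of the lemma in \cite{FCWK} that your phrase ``trajectories starting in an open set sweep out an open set'' is standing in for. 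Also note that item 2 asserts $\mathrm{int}\,\mathcal{C}\subset\mathcal{A}(x)$ for \emph{every} $x\in\mathcal{C}$, not only for interior points; the same argument works, since approximate controllability already gives $\mathcal{C}\subset\mathrm{cl}(\mathcal{A}(x))$ for all $x\in\mathcal{C}$.

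The genuine gap is coverage. The proposition has five items and your proposal addresses only item 2 and half of item 1. You say nothing about connectedness of $\mathcal{C}$, nothing about item 3 (a periodic trajectory through a point of $\mathrm{int}\,\mathcal{C}$ stays in $\mathrm{int}\,\mathcal{C}$ for all $t\in\mathbb{R}$), and nothing about the two chains of equivalences in items 4 and 5 ($\mathcal{C}$ closed $\Leftrightarrow$ invariant in positive time $\Leftrightarrow$ $\mathcal{C}=\mathrm{cl}(\mathcal{A}(g))$ for all $g\in\mathcal{C}$, and the dual statement for openness, negative-time invariance and $\mathcal{A}^{\ast}(g)$). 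None of these follow formally from what you establish: item 3 needs a separate maximality argument showing that the whole periodic orbit can be adjoined to $\mathcal{C}$ without destroying approximate controllability and then that it cannot touch $\partial\mathcal{C}$; the implication ``closed $\Rightarrow$ positively invariant'' in item 4 rests on a no-return argument (a trajectory leaving $\mathcal{C}$ and coming back would enlarge the control set, contradicting maximality), and the converse directions use item 2 together with closedness or openness. These are precisely the parts of Theorem 3.1.5 of \cite{FCWK} that the later sections of the paper actually invoke (items 4 and 5 drive the characterizations of closed and open control sets, and item 3 is the key to the uniqueness theorem), so a proof of the proposition must supply them.
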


\begin{itemize}
\item[1.] $\mathcal{C}$ is connected and $\mathrm{cl}(\mathrm{int}\, 
\mathcal{C})=\mathrm{cl}(\mathcal{C})$

\item[2.] $\mathrm{int}\, \mathcal{C}\subset \mathcal{A}(x)$ for any $x\in 
\mathcal{C}.$ For any $y\in \mathrm{int}\, \mathcal{C}$ 
\begin{equation}
\mathcal{C}=\mathrm{cl}(\mathcal{A}(y))\cap \mathcal{A}^{\ast }(y).
\label{control}
\end{equation}%
In particular the system is controllable on $\mathrm{int}\, \mathcal{C}$.

\item[3.] Assume that $\phi _{t,u}(x)$ is a periodic trajectory, that is, $%
\phi _{t+s,u}(x)=\phi _{t,u}(x)$ for some $s>0$ and all $t\in \mathbb{R}$.
Therefore, if $x\in \mathrm{int}\, \mathcal{C}$ then $\phi _{t,u}(x)\in 
\mathrm{int}\, \mathcal{C}$ for all $t\in \mathbb{R}$.

\item[4.] $\mathcal{C}$ is closed $\Leftrightarrow \mathcal{C}$ is invariant
in positive time $\Leftrightarrow \mathcal{C}=\mathrm{cl}(\mathcal{A}(g))$
for any $g\in \mathcal{C}$;

\item[5.] $\mathcal{C}$ is open $\Leftrightarrow \mathcal{C}$ is invariant
in negative time $\Leftrightarrow \mathcal{C}=\mathcal{A}^*(g)$ for any $g\in%
\mathcal{C}$;
\end{itemize}

\subsection{Linear vector fields and decompositions}

Let $G$ be a connected Lie group with Lie algebra $\mathfrak{g}$.

\begin{definition}
A vector field $\mathcal{X}$ on $G$ is said to be \emph{linear} if its flows 
$(\varphi _{t})_{t\text{ }\in \text{ }\mathbb{R}}$ is a $1$-parameter group
of $G$-automorphisms.
\end{definition}

Certainly, the vector field $\mathcal{X}$ is complete. Furthermore, one can
associate to $\mathcal{X}$ a derivation $\mathcal{D}$ of $\mathfrak{g}$
defined by 
\begin{equation*}
\mathcal{D}Y=-[\mathcal{X},Y](e),\mbox{ for all }Y\in \mathfrak{g}.
\end{equation*}%
The relation between $\varphi _{t}$ and $\mathcal{D}$ is given by the
formula 
\begin{equation}
(d\varphi _{t})_{e}=\mathrm{e}^{t\mathcal{D}}\; \; \; \mbox{ for all }\; \;
\;t\in \mathbb{R}  \label{derivativeonorigin}
\end{equation}%
which implies that 
\begin{equation*}
\varphi _{t}(\exp Y)=\exp (\mathrm{e}^{t\mathcal{D}}Y),\mbox{ for all }t\in 
\mathbb{R},Y\in \mathfrak{g}.
\end{equation*}

On the other hand, if the group is simply connected any derivation $\mathcal{%
D}$ has an associated linear vector field $\mathcal{X=X}^{D}$ through the
same formula above. For connected Lie groups the same is true when $\mathcal{%
D\in }$ $\mathfrak{aut(}G\mathfrak{)}$ the Lie algebra of $Aut(G)$ the Lie
group of $G$-automorphism (see \cite{VAJT}).

Next we explicitly some decomposition of $\mathfrak{g}$ and $G$ induced by
the derivation $\mathcal{D}$. Let us consider the generalized eigenspaces of 
$\mathcal{D}$ given by 
\begin{equation*}
\mathfrak{g}_{\alpha }=\{X\in \mathfrak{g}:(\mathcal{D}-\alpha )^{n}X=0%
\mbox{
for some }n\geq 1\}
\end{equation*}%
where $\alpha $ is an eigenvalue of $\mathcal{D}$.

It turns out that $[\mathfrak{g}_{\alpha },\mathfrak{g}_{\beta }]\subset 
\mathfrak{g}_{\alpha +\beta }$ when $\alpha +\beta $ is an eigenvalue of $%
\mathcal{D}$ and zero otherwise. This fact allow to us to decompose $%
\mathfrak{g}$ as 
\begin{equation*}
\mathfrak{g}=\mathfrak{g}^{+}\oplus \mathfrak{g}^{0}\oplus \mathfrak{g}^{-}
\end{equation*}%
where 
\begin{equation*}
\mathfrak{g}^{+}=\bigoplus_{\alpha :\, \mathrm{Re}(\alpha )\text{ }>\text{ }%
0}\mathfrak{g}_{\alpha },\hspace{1cm}\mathfrak{g}^{0}=\bigoplus_{\alpha :\,%
\mathrm{Re}(\alpha )\text{ }=\text{ }0}\mathfrak{g}_{\alpha }\hspace{1cm}%
\mbox{ and }\hspace{1cm}\mathfrak{g}^{-}=\bigoplus_{\alpha :\, \mathrm{Re}%
(\alpha )\text{ }<\text{ }0}\mathfrak{g}_{\alpha }.
\end{equation*}%
It is easy to see that $\mathfrak{g}^{+},\mathfrak{g}^{0},\mathfrak{g}^{-}$
are Lie algebras and $\mathfrak{g}^{+}$, $\mathfrak{g}^{-}$ are nilpotent
(see Proposition 3.1 of \cite{SM1}).

We denote by $G^{+}$, $G^{-}$, $G^{0}$, $G^{+,0},$ and $G^{-,0}$ the
connected Lie subgroups of $G$ with Lie algebras $\mathfrak{g}^{+}$, $%
\mathfrak{g}^{-}$, $\mathfrak{g}^{0}$, $\mathfrak{g}^{+,0}:=\mathfrak{g}%
^{+}\oplus \mathfrak{g}^{0}$ and $\mathfrak{g}^{-,0}:=\mathfrak{g}^{-}\oplus 
\mathfrak{g}^{0}$ respectively.

\subsection{Linear systems on Lie groups}

A linear system on a Lie group $G$ is a control-affine system 
\begin{equation}
\dot{g}(t)=\mathcal{X}(g(t))+\sum_{j=1}^{m}u_{j}(t)\text{ }X^{j}(g(t)),
\label{linearsystem}
\end{equation}%
where the drift vector field $\mathcal{X}$ is a linear vector field, $X^{j}$
are right invariant vector fields and $u=(u_{1},\cdots ,u_{m})\in \mathcal{U}
$ as before.

For a given $g\in G$, $u\in \mathcal{U}$ and $t\in \mathbb{R}$ the solution
of the linear system (\ref{linearsystem}) starting at $g$ reads as 
\begin{equation}
\phi _{t,u}(g)=L_{\phi _{t,u}}(\varphi _{t}(g))=\phi _{t,u}\, \varphi _{t}(g),
\label{solutionform}
\end{equation}%
where $\phi _{t,u}=\phi _{t,u}(e)$ is the solution of (\ref{linearsystem})
starting at the identity element $e\in G$ (see for instance \cite{DaSilva}).

Let us denote by $\mathcal{A}_{\leq \text{ }\tau }$, $\mathcal{A}_{\tau }$
and $\mathcal{A}$ the sets $\mathcal{A}_{\leq \text{ }\tau }(e)$, $\mathcal{A%
}_{\tau }(e)$ and $\mathcal{A}(e)$, respectively. For any $u\in \mathcal{U}$
it follows from equation (\ref{solutionform}) that the solutions of the
linear system (\ref{linearsystem}) satisfy $\phi _{t,u}^{-1}=\phi
_{-t,\Theta _{t}u}.$ Therefore, 
\begin{equation}
\mathcal{A}_{\tau }^{\ast }=\varphi _{-\tau }(\mathcal{A}_{\tau }^{-1}).
\label{sets}
\end{equation}

The next proposition states the main properties of the reachable sets of
linear systems, its proof can be found in \cite{JPh}, Proposition 2.

\begin{proposition}
With the previous notations it holds:

\begin{itemize}
\item[1.] $\tau >0$ $\Rightarrow $ $\mathcal{A}_{\tau }=\mathcal{A}_{\leq 
\text{ }\tau }$

\item[2.] $0\leq \tau _{1}\leq \tau _{2}$ $\Rightarrow $ $\mathcal{A}_{\tau
_{1}}\leq \mathcal{A}_{\tau _{2}}$

\item[3.] $g\in G$ $\Rightarrow $ $\mathcal{A}_{\tau }(g)=\mathcal{A}_{\tau
}\varphi _{\tau }(g)$

\item[4.] $\tau _{1},\tau _{2}\geq 0$ $\Rightarrow $ $\mathcal{A}_{\tau
_{1}+\tau _{2}}=\mathcal{A}_{\tau _{1}}\varphi _{\tau _{1}}(\mathcal{A}%
_{\tau _{2}})=\mathcal{A}_{\tau _{2}}\varphi _{\tau _{2}}(\mathcal{A}_{\tau
_{1}})$
\end{itemize}
\end{proposition}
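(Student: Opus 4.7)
The plan is to exploit the explicit solution formula $\phi_{t,u}(g)=\phi_{t,u}\,\varphi_t(g)$ recorded in equation~(\ref{solutionform}) together with the cocycle property, deriving the four items in the order 3, 4, 1, 2 so that each builds on the previous ones. The unifying idea is that this product formula reduces every statement about $\mathcal{A}_\tau(g)$ to a statement about the orbit $\mathcal{A}_\tau=\mathcal{A}_\tau(e)$ of the identity.

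Item 3 reads directly off the formula: $\mathcal{A}_\tau(g)=\{\phi_{\tau,u}\,\varphi_\tau(g):u\in\mathcal{U}\}=\mathcal{A}_\tau\,\varphi_\tau(g)$. For item 4 I would first establish the intermediate equality $\mathcal{A}_{\tau_1+\tau_2}=\mathcal{A}_{\tau_1}(\mathcal{A}_{\tau_2})$. The inclusion $\subset$ follows from the cocycle identity $\phi_{\tau_1+\tau_2,u}(e)=\phi_{\tau_1,\Theta_{\tau_2}u}(\phi_{\tau_2,u}(e))$, while $\supset$ is obtained by concatenating arbitrary controls, gluing $u_2$ on $[0,\tau_2]$ to $u_1(\cdot-\tau_2)$ on $[\tau_2,\tau_1+\tau_2]$ exactly as in the concatenation property stated in Section~2.1. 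Applying item 3 then gives $\mathcal{A}_{\tau_1}(\mathcal{A}_{\tau_2})=\bigcup_{h\in\mathcal{A}_{\tau_2}}\mathcal{A}_{\tau_1}\,\varphi_{\tau_1}(h)=\mathcal{A}_{\tau_1}\,\varphi_{\tau_1}(\mathcal{A}_{\tau_2})$, and exchanging the roles of $\tau_1$ and $\tau_2$ yields the second identity.

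For item 1, only the inclusion $\mathcal{A}_{\leq\tau}\subset\mathcal{A}_\tau$ requires argument. The crucial observation is that each $\varphi_s$ is a $G$-automorphism and so fixes $e$; consequently the $u\equiv 0$ trajectory from $e$ coincides with $\varphi_s(e)\equiv e$, whence $e\in\mathcal{A}_s$ for every $s\geq 0$. Given $g\in\mathcal{A}_t$ with $0<t\leq\tau$, item 4 gives $\mathcal{A}_\tau=\mathcal{A}_t\,\varphi_t(\mathcal{A}_{\tau-t})$, and since $\varphi_t(e)=e\in\varphi_t(\mathcal{A}_{\tau-t})$ this set contains $g\,\varphi_t(e)=g$. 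Item 2 follows immediately from item 1 combined with the tautological monotonicity $\mathcal{A}_{\leq\tau_1}\subset\mathcal{A}_{\leq\tau_2}$.

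No step is a serious obstacle here: the entire proof is formal manipulation built on the solution formula and careful bookkeeping of concatenated controls. The only subtle point, worth highlighting, is that turning ``time up to $\tau$'' into ``time exactly $\tau$'' in item 1 uses essentially the group structure --- one cannot in general freeze a trajectory in place, but one can pad at the identity with zero control because the automorphism $\varphi_s$ fixes $e$, a role played by the origin in the Euclidean analogue.
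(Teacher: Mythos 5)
Your proof is correct. The paper itself does not prove this proposition but simply refers to Proposition 2 of \cite{JPh}, and your argument is essentially the standard one given there: everything is reduced to the identity via the product formula $\phi_{t,u}(g)=\phi_{t,u}\,\varphi_t(g)$, item 4 comes from the cocycle/concatenation identities, and item 1 is obtained by padding at $e$ with the zero control, using that the automorphisms $\varphi_s$ fix the identity. Your closing remark correctly isolates the one genuinely non-formal ingredient, and your handling of the $t=0$ case (so that $e\in\mathcal{A}_s$ for all $s\ge 0$) closes the only loophole in the $\mathcal{A}_{\leq\tau}\subset\mathcal{A}_\tau$ inclusion.
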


The next result (Lemma 3.1 of \cite{DaSilva1}) shows that the accessible set 
$\mathcal{A}$ is invariant by right translations of elements whose $\mathcal{%
X}$-orbits are contained in $\mathcal{A}$.

\begin{lemma}
\label{pointinvariance} Let $g\in \mathcal{A}$ and assume that $\varphi
_{t}(g)\in \mathcal{A}$ for any $t\in \mathbb{R}$. Then $\mathcal{A}\cdot
g\subset \mathcal{A}$.
\end{lemma}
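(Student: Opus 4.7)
My plan is to show $hg \in \mathcal{A}$ for an arbitrary $h \in \mathcal{A}$ by exhibiting an explicit concatenated trajectory, using the group-theoretic factorization of solutions and the semigroup property of the reachable sets given in Proposition 2.4.

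First I would use Proposition 2.4(1) to write $h \in \mathcal{A}_s$ for some $s > 0$, so that $h = \phi_{s, u}$ for some admissible $u$. The hypothesis that the entire $\varphi$-orbit of $g$ lies in $\mathcal{A}$ gives, in particular, $\varphi_{-s}(g) \in \mathcal{A}$; applying Proposition 2.4(1) again, there is some $\tau > 0$ and a control $v$ with $\phi_{\tau, v} = \varphi_{-s}(g)$.

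Next, I would invoke the key identity, Proposition 2.4(4):
\begin{equation*}
\mathcal{A}_{s + \tau} = \mathcal{A}_{\tau}\,\varphi_{\tau}(\mathcal{A}_{s}) = \mathcal{A}_{s}\,\varphi_{s}(\mathcal{A}_{\tau}).
\end{equation*}
Since $h \in \mathcal{A}_s$ and $g = \varphi_s(\varphi_{-s}(g)) \in \varphi_s(\mathcal{A}_\tau)$, the right-hand factorization yields $hg \in \mathcal{A}_{s+\tau} \subset \mathcal{A}$, which is what we wanted. Concretely, this corresponds to the concatenated trajectory: first apply $v$ on $[0,\tau]$ to steer $e$ to $\varphi_{-s}(g)$, then apply $u$ on $[\tau, \tau+s]$; by formula (\ref{solutionform}) for the solutions of the linear system, the endpoint is $\phi_{s,u}\cdot \varphi_s(\varphi_{-s}(g)) = hg$.

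No step here looks like a serious obstacle; the only subtlety is making sure we are allowed to pick a precise time $s$ with $h \in \mathcal{A}_s$ (rather than only $h \in \mathcal{A}_{\leq s}$), which is exactly the content of Proposition 2.4(1), and to line up the $\varphi$-translate correctly so that the $\varphi_s$ appearing in the product formula cancels the $\varphi_{-s}$ in the hypothesis. Both are direct applications of the stated results, so the proof is essentially a bookkeeping argument built on the factorization $\phi_{t,u}(g) = \phi_{t,u}\,\varphi_t(g)$.
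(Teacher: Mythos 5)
Your argument is correct: writing $h=\phi_{s,u}\in\mathcal{A}_s$, using the hypothesis to get $\varphi_{-s}(g)\in\mathcal{A}_\tau$, and then invoking $\mathcal{A}_{s+\tau}=\mathcal{A}_s\,\varphi_s(\mathcal{A}_\tau)$ to conclude $hg=\phi_{s,u}\,\varphi_s(\varphi_{-s}(g))\in\mathcal{A}$ is exactly the standard concatenation proof. The paper itself does not reprove this lemma (it cites Lemma 3.1 of the solvable-groups paper), but your argument is the one that reference uses, so there is nothing to add.
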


\begin{remark}
Using the above lemma it follows that 
\begin{equation*}
\text{if }\varphi _{t}(g)\in \mathrm{cl}(\mathcal{A})\text{ for any }t\in 
\mathbb{R}\Rightarrow \mathrm{cl}(\mathcal{A})\cdot g\subset \mathrm{cl}(%
\mathcal{A}).
\end{equation*}
\end{remark}

In order to extend the controllability results in \cite{DaSilva1} from
solvable groups to more general Lie groups, the authors in \cite{DaAy}
introduced the following notion

\begin{definition}
\label{finitess} Let $G$ be a connected Lie group. We say that the Lie group 
$G$ has finite semisimple center if all semisimple Lie subgroups of $G$ have
finite center.
\end{definition}

For such class of Lie groups we have the following result (Theorem 3.9 of 
\cite{DaAy}).

\begin{proposition}
\label{generalcase} Let $G$ be a connected Lie group with finite semisimple
center. If $\mathcal{A}$ is open, then $G^{+,0}\subset \mathcal{A}$ and $%
G^{-, 0}\subset \mathcal{A}^*$.
\end{proposition}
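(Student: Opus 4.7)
The plan is to establish $G^{+,0}\subset \mathcal{A}$ by proving separately that $G^{+}\subset \mathcal{A}$ and that $G^{0}\subset \mathcal{A}$, and then combining these via Lemma \ref{pointinvariance}; the dual statement $G^{-,0}\subset \mathcal{A}^{*}$ will be obtained by applying the same reasoning to the time-reversed linear system (whose derivation is $-\mathcal{D}$). As a preliminary observation, the constant zero control gives $\phi _{t,0}(e)=\varphi _{t}(e)=e$ for every $t>0$, so $e\in \mathcal{A}$, and since $\mathcal{A}$ is open by hypothesis it is in fact an open neighborhood of the identity.

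For the inclusion $G^{+}\subset \mathcal{A}$, I would exploit the contraction of $\varphi _{-t}$ on $G^{+}$. Because every eigenvalue of $\mathcal{D}$ on $\mathfrak{g}^{+}$ has strictly positive real part and $\mathfrak{g}^{+}$ is nilpotent, $\varphi _{-t}(g)\to e$ as $t\to +\infty$ for every $g\in G^{+}$. Hence for $T$ large enough, $\varphi _{-T}(g)\in \mathcal{A}$, so there exist $s>0$ and $u\in \mathcal{U}$ with $\phi _{s,u}=\varphi _{-T}(g)$. Concatenating $u$ with the zero control on $[s,s+T]$ and using the cocycle property yields a trajectory ending at
\[
\phi _{T,0}\bigl(\phi _{s,u}(e)\bigr)=\varphi _{T}(\varphi _{-T}(g))=g,
\]
so $g\in \mathcal{A}$.

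The main obstacle is showing $G^{0}\subset \mathcal{A}$: on $\mathfrak{g}^{0}$ the spectrum of $\mathcal{D}$ is purely imaginary, so $\varphi _{t}$ admits no hyperbolic contraction on $G^{0}$ and the previous argument breaks down. This is precisely where the finite semisimple center hypothesis enters. The strategy is to extract a recurrence/boundedness property of $\varphi _{t}$ on $G^{0}$: by passing to the Levi decomposition $G^{0}=S^{0}\cdot R^{0}$ with $S^{0}$ semisimple of finite center, one shows that $\mathcal{D}|_{\mathfrak{s}^{0}}$ is inner and that the purely imaginary spectrum forces the generating element to be of compact type, so that conjugation by the associated one-parameter subgroup preserves a compact substructure; on the solvable radical $R^{0}$ one then handles the remaining directions by exploiting the nilpotent Jordan part of $\mathcal{D}|_{\mathfrak{g}^{0}}$ together with the semigroup identity $\mathcal{A}_{\tau _{1}+\tau _{2}}=\mathcal{A}_{\tau _{1}}\varphi _{\tau _{1}}(\mathcal{A}_{\tau _{2}})$ and the openness of $\mathcal{A}$ about $e$, propagating the inclusion via right translation. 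Together, these give $G^{0}\subset \mathcal{A}$.

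Finally, I would assemble the two pieces. Every $g\in G^{0}$ has its entire $\varphi $-orbit contained in $G^{0}\subset \mathcal{A}$ (since $\mathfrak{g}^{0}$ is $\mathcal{D}$-invariant), so Lemma \ref{pointinvariance} gives $\mathcal{A}\cdot g\subset \mathcal{A}$; in particular $G^{+}\cdot g\subset \mathcal{A}$. Letting $g$ range over $G^{0}$ yields $G^{+}\cdot G^{0}\subset \mathcal{A}$, and since $\mathfrak{g}^{+}$ is an ideal of $\mathfrak{g}^{+,0}$ the product $G^{+}\cdot G^{0}$ is a connected subgroup with Lie algebra $\mathfrak{g}^{+}\oplus \mathfrak{g}^{0}=\mathfrak{g}^{+,0}$, hence it coincides with $G^{+,0}$. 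This proves $G^{+,0}\subset \mathcal{A}$, and applying the same reasoning to the reversed system (whose $\mathfrak{g}^{+}$ and $\mathfrak{g}^{-}$ are swapped while $\mathfrak{g}^{0}$ is preserved) yields $G^{-,0}\subset \mathcal{A}^{*}$.
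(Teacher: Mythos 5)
First, a point of context: the paper does not prove this proposition at all — it is imported verbatim as Theorem 3.9 of \cite{DaAy} — so there is no internal proof to compare against. Judged on its own terms, your proposal has the right architecture, which is indeed the standard one: prove $G^{+}\subset\mathcal{A}$ by hyperbolic contraction of $\varphi_{-t}$ on $G^{+}$, prove $G^{0}\subset\mathcal{A}$ separately, glue the two via Lemma \ref{pointinvariance} and the identity $G^{+,0}=G^{+}G^{0}$, and dualize through the time-reversed system (whose derivation is $-\mathcal{D}$ and whose reachable set is $\mathcal{A}^{*}$). Your argument for $G^{+}\subset\mathcal{A}$ is complete and correct, and the assembly and dualization steps are fine.

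The genuine gap is the inclusion $G^{0}\subset\mathcal{A}$, which is the entire content of the theorem and is not actually proved in your second paragraph: phrases such as ``one shows that \ldots the generating element is of compact type, so that conjugation \ldots preserves a compact substructure'' and ``one then handles the remaining directions by exploiting the nilpotent Jordan part \ldots propagating the inclusion via right translation'' name ingredients without executing any of them. Concretely, the missing steps are each nontrivial. (i) A derivation need not preserve a Levi subalgebra, so the splitting $G^{0}=S^{0}R^{0}$ you invoke has to be justified (in \cite{DaAy} one instead passes to the $\mathcal{D}$-invariant radical $R$ and works on the semisimple quotient $G/R$, where the induced derivation is inner). (ii) Even granting that the closure of $\{\varphi_{t}|_{S^{0}}\}$ is compact, this recurrence does not immediately give $S^{0}\subset\mathcal{A}$: for $g\in G^{0}$ the orbit $\varphi_{t}(g)$ stays in $G^{0}$ but may leave the neighborhood $\mathcal{A}\cap G^{0}$, so Lemma \ref{pointinvariance} does not apply directly; one needs a quantitative argument combining $e\in\mathrm{int}\,\mathcal{A}_{\tau_{0}}$, the semigroup identity $\mathcal{A}_{\tau_{1}+\tau_{2}}=\mathcal{A}_{\tau_{1}}\varphi_{\tau_{1}}(\mathcal{A}_{\tau_{2}})$, and a recurrence sequence $t_{n}\to\infty$ along which $\varphi_{t_{n}}$ returns close to the identity. (iii) On the solvable part the unipotent Jordan factor of $\mathrm{e}^{t\mathcal{D}}|_{\mathfrak{g}^{0}}$ grows polynomially and the argument requires an induction along the derived (or central) series. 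As written, the hard half of the proposition is asserted rather than proved.
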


\begin{remark}
We should notice that a sufficient condition to get $G^{+,0}\subset \mathcal{%
A}$ in Theorem 3.9 of \cite{DaAy} is the fact that 
\begin{equation*}
e\in \mathrm{int}\mathcal{A}_{\tau _{0}},\text{ for some }\tau _{0}>0.
\end{equation*}%
However, by Lemma 4.5.2 of \cite{FCWK} we know that this condition is
equivalent to $\mathcal{A}$ being open.
\end{remark}

\begin{remark}
We should also notice that the condition on the openness of $\mathcal{A}$
implies, in particular, that the system satisfies the Lie algebra rank
condition (see Theorem 3.3 of \cite{VAJT}).
\end{remark}

\section{Control sets of a linear system}

In this section we prove our main result. We start with a proposition which
states properties of the subgroups obtained from the $\mathcal{D}$%
-decomposition.

\begin{proposition}
\label{subgroups} It holds :

\begin{itemize}
\item[1.] $G^{+,0}=G^{+}G^{0}=G^{0}G^{+}$ and $G^{-,0}=G^{-}G^{0}=G^{0}G^{-}$

\item[2.] $G^{+}\cap G^{-}=G^{+,0}\cap G^{-}=G^{-,0}\cap G^{+}=\{e\}$

\item[3.] $G^{+,0}\cap G^{-,0}=G^{0}$

\item[4.] All the above subgroups are closed in $G$

\item[5.] If $G$ is solvable then 
\begin{equation}
G=G^{+,0}G^{-}=G^{-,0}G^{+}.  \label{decomposition}
\end{equation}%
Moreover, the fixed points of $\mathcal{X}$ are in $G^{0}$;
\end{itemize}
\end{proposition}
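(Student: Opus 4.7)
My plan is to prove the items in the order 1, 4, 2, 3, 5, using throughout the dynamical input that for $g=\exp X_1\cdots\exp X_k$ with $X_j\in\mathfrak{g}^{+}$ (respectively $\mathfrak{g}^{-}$), the formula $\varphi_t(\exp X)=\exp(\mathrm{e}^{t\mathcal{D}}X)$ gives $\varphi_t(g)\to e$ as $t\to -\infty$ (respectively $t\to +\infty$). Item 1 is purely algebraic: the bracket relation $[\mathfrak{g}_\alpha,\mathfrak{g}_\beta]\subset\mathfrak{g}_{\alpha+\beta}$ yields $[\mathfrak{g}^{0},\mathfrak{g}^{\pm}]\subset\mathfrak{g}^{\pm}$, so $\mathfrak{g}^{\pm}$ is an ideal of $\mathfrak{g}^{\pm,0}$. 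Hence $G^{\pm}$ is normal in the subgroup generated by $G^{\pm}\cup G^{0}$, so $G^{\pm}G^{0}=G^{0}G^{\pm}$ is a connected Lie subgroup with Lie algebra $\mathfrak{g}^{\pm,0}$, which forces it to equal $G^{\pm,0}$.

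Item 4 is the main obstacle and I would handle it next, since closedness streamlines items 2 and 3. My strategy is to identify each subgroup with a $\varphi$-invariant set defined by orbit limit behavior at $\pm\infty$, then verify closedness from the local linearization via $\exp$ near $e$. Concretely, $G^{+}$ should coincide with the strong unstable manifold $W^{u}(e):=\{g\in G:\varphi_t(g)\to e\text{ as }t\to -\infty\}$: the inclusion $G^{+}\subset W^{u}(e)$ is the dynamical input, while for the reverse inclusion, $\exp$ conjugates $\varphi_t$ to $\mathrm{e}^{t\mathcal{D}}$ near $e$, whose spectral subspace contracted as $t\to -\infty$ is exactly $\mathfrak{g}^{+}$, so $W^{u}(e)\cap V=\exp(\mathfrak{g}^{+}\cap W)\subset G^{+}$ on a suitable neighborhood $V$ of $e$, and $\varphi$-invariance extends the identification to all of $W^{u}(e)$. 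The same scheme identifies $G^{-}=W^{s}(e)$ and, via center-(un)stable sets, yields closedness of $G^{0},G^{\pm,0}$. The arguments parallel the stable-manifold theorem for linear flows on Lie groups (cf.\ \cite{SM1}, \cite{VAJT}); passing from the local picture to global closedness is the delicate step.

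Given item 4, items 2 and 3 follow cleanly. For item 2, $G^{+}\cap G^{-}$ is a closed subgroup of $G$ with Lie algebra $\mathfrak{g}^{+}\cap\mathfrak{g}^{-}=0$, hence discrete. It is $\varphi$-invariant, so the continuous curve $t\mapsto\varphi_t(g)$ is constant in this discrete set for any $g$ in it; combined with $\varphi_t(g)\to e$ as $t\to -\infty$, this gives $g=e$. The same argument applies to $G^{+}\cap G^{-,0}$ and $G^{-}\cap G^{+,0}$ using $\mathfrak{g}^{+}\cap\mathfrak{g}^{-,0}=0=\mathfrak{g}^{-}\cap\mathfrak{g}^{+,0}$. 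For item 3, given $g\in G^{+,0}\cap G^{-,0}$, use item 1 to write $g=g^{+}g^{0}$ with $g^{+}\in G^{+},g^{0}\in G^{0}$; since $G^{0}\subset G^{-,0}$, we get $g^{+}=g(g^{0})^{-1}\in G^{+}\cap G^{-,0}=\{e\}$ by item 2, so $g=g^{0}\in G^{0}$.

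For item 5, the multiplication $G^{+,0}\times G^{-}\to G$ has derivative $\mathfrak{g}^{+,0}\oplus\mathfrak{g}^{-}\to\mathfrak{g}$ at $(e,e)$, an isomorphism since $\mathfrak{g}=\mathfrak{g}^{+,0}\oplus\mathfrak{g}^{-}$ as a vector space, so its image is open in $G$. Global surjectivity uses solvability: I would pass to the universal cover $\tilde{G}$ (diffeomorphic to $\mathbb{R}^{n}$, being simply connected and solvable) and induct on $\dim G$ via a $\mathcal{D}$-invariant flag of ideals of $\mathfrak{g}$, yielding $\tilde{G}=\tilde{G}^{+,0}\tilde{G}^{-}$, which descends to $G=G^{+,0} G^{-}$; the other decomposition is symmetric. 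For the fixed-point claim, if $\mathcal{X}(g)=0$ then $\varphi_t(g)=g$ for all $t$; write $g=g^{+,0}g^{-}$ and use uniqueness of this decomposition (from $G^{+,0}\cap G^{-}=\{e\}$, item 2) to propagate $\varphi$-invariance factor-wise, so $\varphi_t(g^{-})=g^{-}$; letting $t\to +\infty$ gives $g^{-}=e$, hence $g\in G^{+,0}$. Symmetrically $g\in G^{-,0}$, and item 3 yields $g\in G^{0}$.
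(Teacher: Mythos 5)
The paper itself gives no proof here: it simply cites \cite{DaSilva1}, Proposition~2.9. Your architecture (items 1, 4, 2, 3, 5 in that order) is reasonable, and items 1--3 are handled correctly modulo item 4. However, the two items that carry all the substance, closedness (item 4) and the global decomposition (item 5), are only sketched, and the sketch of item 4 has a genuine hole. First, the inclusion $W^{u}(e)\subset G^{+}$ is not "the local picture plus $\varphi$-invariance": for $g\in W^{u}(e)$ you know $\varphi_t(g)=\exp Y_t$ with $Y_t\to 0$ for $t\ll 0$, but to conclude $Y_t\in\mathfrak{g}^{+}$ you must control the entire backward orbit of $Y_t$ under $\mathrm{e}^{s\mathcal{D}}$ inside the linearizing neighborhood $W$ (the identity $\varphi_s\circ\exp=\exp\circ\,\mathrm{e}^{s\mathcal{D}}$ only pins down the orbit while it stays where $\exp$ is injective; a backward orbit converging to $e$ could exit and re-enter). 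Second, and more seriously, even granting $G^{+}=W^{u}(e)$, closedness does not follow: unstable sets of fixed points are not closed in general, and for a sequence $g_n\to g$ with $g_n\in W^{u}(e)$ the local model at $e$ gives no control on the backward orbit of $g$. One needs either uniform contraction estimates exploiting the group structure, or the algebraic route of \cite{DaSilva1} (pass to $\mathrm{cl}(G^{+})$, a $\varphi$-invariant connected subgroup whose $\mathcal{D}$-invariant Lie algebra is forced to equal $\mathfrak{g}^{+}$). Worst of all is $G^{0}$ and $G^{\pm,0}$: your "center-(un)stable sets" are not defined by any limit condition, and no dynamical characterization of $G^{0}$ analogous to $W^{u}(e)$ is available in general, so this case --- the hardest one --- is left entirely open.

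For item 5, note that $G^{+,0}G^{-}$ is merely a product of subsets, not a subgroup, so openness of the multiplication map near $(e,e)$ yields nothing global; the claimed induction on $\dim G$ along a $\mathcal{D}$-invariant flag of ideals \emph{is} the proof (one must choose a $\mathcal{D}$-invariant ideal, check that the $\pm,0$ decomposition behaves well under the quotient, and lift the decomposition back, which is precisely where solvability is used), and you have not carried out any of these steps. The fixed-point assertion at the end of item 5 is argued correctly given the earlier items. In summary: the plan is plausible and parallels the structure of the cited proof, but items 4 and 5 as written are announcements of strategy rather than proofs, and the closedness strategy as stated would not go through.
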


\begin{proof}
The proof of items 1. to 5. can be found in \cite{DaSilva1} Proposition 2.9.
\end{proof}

\ Next we prove the following

\begin{proposition}
If $\mathcal{D}$ is inner and $G^{0}$ is compact, then $G=G^{0}$.
Furthermore,, the decomposition (\ref{decomposition}) also holds when we
just assume that $G^{0}$ is a compact subgroup.
\end{proposition}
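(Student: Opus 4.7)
The plan is to treat the two assertions in sequence. For the first, I would use that an inner derivation means $\mathcal{D}=\ad(X)$ for some $X\in\mathfrak{g}$. Then $\mathcal{D}X=[X,X]=0$, so $X\in\ker\mathcal{D}\subset\mathfrak{g}^{0}$, and hence $\exp(\mathbb{R}X)\subset G^{0}$ is relatively compact by the compactness hypothesis. Applying $\Ad:G\to GL(\mathfrak{g})$, the one-parameter subgroup $\{e^{t\mathcal{D}}=\Ad(\exp tX)\}_{t\in\mathbb{R}}$ has relatively compact image in $GL(\mathfrak{g})$; its closure is a compact subgroup, conjugate into an orthogonal group, which forces $\mathcal{D}$ to be semisimple with purely imaginary spectrum. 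Thus $\mathfrak{g}^{+}=\mathfrak{g}^{-}=0$, so $\mathfrak{g}=\mathfrak{g}^{0}$, and connectedness of $G$ gives $G=G^{0}$.

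For the second assertion, I would reduce the decomposition $G=G^{+,0}G^{-}$ (the other being symmetric) to the solvable case of Proposition \ref{subgroups}.5 via a Mostow-type $\mathcal{D}$-invariant Levi splitting $\mathfrak{g}=\mathfrak{r}\oplus\mathfrak{s}$, with $\mathfrak{r}$ the solvable radical and $\mathfrak{s}$ semisimple. Since all derivations of a semisimple Lie algebra are inner, $\mathcal{D}|_\mathfrak{s}=\ad_\mathfrak{s}(Y)$ for some $Y\in\mathfrak{s}$. As in the first part, $Y\in\ker\mathcal{D}\subset\mathfrak{g}^{0}$, so $\exp(\mathbb{R}Y)\subset G^{0}$ is relatively compact, and the boundedness argument of the first part applied inside $\mathfrak{s}$ shows that $\mathcal{D}|_\mathfrak{s}$ is semisimple with purely imaginary spectrum. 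Consequently $\mathfrak{s}^{0}=\mathfrak{s}$, so $\mathfrak{s}\subset\mathfrak{g}^{0}$, $\mathfrak{s}^{\pm}=0$, and $\mathfrak{g}^{\pm}=\mathfrak{r}^{\pm}$ (hence $G^{\pm}=R^{\pm}$).

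To combine, the eigenvalue bracket rule $[\mathfrak{g}_\alpha,\mathfrak{g}_\beta]\subset\mathfrak{g}_{\alpha+\beta}$ immediately gives $[\mathfrak{g}^{0},\mathfrak{g}^{-}]\subset\mathfrak{g}^{-}$ and $[\mathfrak{g}^{0},\mathfrak{g}^{+,0}]\subset\mathfrak{g}^{+,0}$, so $G^{0}$, and in particular $S\subset G^{0}$, normalizes both $G^{-}=R^{-}$ and $R^{+,0}$. Applying Proposition \ref{subgroups}.5 to the solvable $R$ yields $R=R^{+,0}R^{-}$, so $G=RS=R^{+,0}R^{-}S=R^{+,0}SR^{-}=G^{+,0}G^{-}$, where the last equality uses $\mathfrak{g}^{+,0}=\mathfrak{r}^{+,0}\oplus\mathfrak{s}$ together with the normalization to identify $R^{+,0}S=G^{+,0}$. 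The main obstacle will be the Mostow-Malcev step of guaranteeing a $\mathcal{D}$-invariant Levi complement; once this structural fact is in place, everything else follows routinely from the solvable case and the bracket rule.
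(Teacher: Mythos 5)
Your first assertion is handled correctly, and by a route genuinely different from the paper's: you push the flow into $GL(\mathfrak{g})$ via $\Ad$, using $\mathrm{e}^{t\mathcal{D}}=\Ad(\exp tX)$ and the fact that a relatively compact one-parameter linear group has a semisimple generator with purely imaginary spectrum, so that $\mathfrak{g}^{+}=\mathfrak{g}^{-}=0$ in one stroke. The paper instead works directly on $G^{-}$ with a left-invariant metric and the expansion estimate $\varrho(\varphi_{-t}(g),\varphi_{-t}(h))\geq c\,\mathrm{e}^{\mu t}\varrho(g,h)$ to conclude that only $g=e$ has a bounded $\varphi$-orbit in $G^{-}$. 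Both are sound; yours is arguably cleaner.

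The second part, however, has a genuine gap exactly where you flag it: a $\mathcal{D}$-invariant Levi complement need not exist, and you give no argument that compactness of $G^{0}$ produces one. Concretely, take $\mathfrak{g}=\mathfrak{sl}(2,\R)\ltimes\R^{2}$ with the standard representation $\rho$ and $\mathcal{D}=\ad(v)$ for $0\neq v\in\R^{2}$: by Malcev's theorem every Levi subalgebra has the form $\mathfrak{s}_{w}=\{Z-\rho(Z)w:\ Z\in\mathfrak{sl}(2,\R)\}$ for some $w\in\R^{2}$, and $\mathcal{D}(\mathfrak{s}_{w})=\rho(\mathfrak{sl}(2,\R))v$, which is not contained in $\mathfrak{s}_{w}$ unless $v=0$. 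So no Levi subalgebra is $\mathcal{D}$-invariant. Mostow's theorem yields an invariant Levi subalgebra only for fully reducible subgroups of $\mathrm{Aut}(\mathfrak{g})$, and $\{\mathrm{e}^{t\mathcal{D}}\}$ is in general not fully reducible, so invoking a ``Mostow--Malcev step'' does not close the gap without additional work. The paper circumvents the issue entirely: the radical $\mathfrak{r}$ is automatically $\mathcal{D}$-invariant, so one passes to the semisimple quotient $G/R$, where the induced derivation is inner and $(G/R)^{0}=\pi(G^{0})$ is compact; the first part then gives $G/R=(G/R)^{0}$, hence $G=G^{0}R$, and the decomposition follows by applying item 5 of Proposition \ref{subgroups} to the $\varphi$-invariant solvable group $R$ together with item 1. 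You should replace the invariant-complement step by this quotient argument, or else actually prove that compactness of $G^{0}$ forces a $\mathcal{D}$-invariant Levi subalgebra to exist, which is not routine.
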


\begin{proof}
Let us assume $\mathcal{D}=\mathrm{ad}(X)$ for some $X\in \mathfrak{g}$.
Certainly $X\in \mathfrak{g}^{0}$ and it holds that 
\begin{equation*}
\varphi _{t}(g)=\mathrm{e}^{-tX}\text{ }g\text{ }\mathrm{e}^{tX},\; \; \; \;%
\mbox{ for any }\,t\in \mathbb{R}\text{ and }g\in G.
\end{equation*}%
Consequently, if $G^{0}$ is compact, the $\mathcal{X}$-orbit 
\begin{equation*}
\mathcal{O}(g)=\{ \varphi _{t}(g),\;t\in \mathbb{R}\}
\end{equation*}%
is bounded for any $g\in G$, since it is contained in the compact set $%
K_{g}=G^{0}g\,G^{0}$. In particular, if $g\in G^{-}$, by the $\mathcal{D}$%
-invariance of $\mathfrak{g}^{-}$ we obtain that $\mathcal{O}(g)\subset
G^{-}\cap K_{g}$ is bounded in $G^{-}$.

Considering that $\varphi _{t}|_{G^{-}}$ is an automorphism of $G^{-}$ if
follows that 
\begin{equation*}
\varrho (\varphi _{t}(g),\varphi _{t}(h))\leq \left \vert \left \vert
(d\varphi _{t}|_{G^{-}})_{e}\right \vert \right \vert \text{ }\varrho
(g,h),\; \; \;g,h\in G^{-}\text{and }t\geq 0,
\end{equation*}%
for any left invariant Riemannian metric on $G^{-}$.

On the other hand, since $(d\varphi _{t})_{e}=\mathrm{e}^{t\mathcal{D}}$ and 
$\mathcal{D}|_{\mathfrak{g}^{-}}$ has only eigenvalues with negative real
part there are $c,\mu >0$ such that 
\begin{equation*}
\left \vert \left \vert (d\varphi _{t}|_{G^{-}})_{e}\right \vert \right
\vert =\left \vert \left \vert \mathrm{e}^{t\mathcal{D}|_{\mathfrak{g}%
^{-}}}\right \vert \right \vert \leq c^{-1}\mathrm{e}^{-\mu t}\text{ for any 
}t\geq 0
\end{equation*}%
implying that 
\begin{equation*}
\varrho (\varphi _{t}(g),\varphi _{t}(h))\leq c^{-1}\text{ }\mathrm{e}^{-\mu
t}\text{ }\varrho (g,h),\; \; \;g,h\in G^{-}\text{ and }t\geq 0.
\end{equation*}%
Consequently 
\begin{equation*}
\varrho (\varphi _{-t}(g),\varphi _{-t}(h))\geq c\text{ }\mathrm{e}^{\mu t}%
\text{ }\varrho (g,h),\; \; \;g,h\in G^{-},\text{ }t\geq 0
\end{equation*}%
which shows that $\mathcal{O}(g)$ is bounded in $G^{-}$ if and only, if $g=e$%
. Therefore, if $G^{0}$ is compact we must have $G^{-}=\{e\}$. Analogously $%
G^{+}=\{e\}$ and $G=G^{0}$ as stated.

Assume now that $G^{0}$ is a compact subgroup of $G$ and $\mathcal{D}$ is an
arbitrary derivation. Let $\mathfrak{r=r(g)}$ stands for the solvable
radical of $\mathfrak{g}$ and $R\subset G$ its associated connected solvable
Lie subgroup. Since $\mathfrak{r}$ is a $\mathcal{D}$-invariant ideal of $%
\mathfrak{g}$, we obtain a well induced linear vector field on the
semisimple Lie group $G/R$. 

Now, any derivation on a semisimple Lie algebra is inner. On the other hand,
Lemma 2.3 of \cite{DaSilva1} give us that 
\begin{equation*}
\left( G/R\right) ^{0}=\pi \left( G^{0}\right) \text{ where }\pi
:G\rightarrow G/R
\end{equation*}%
is the canonical projection. Therefore, from the compacity of $G^{0}$ that
we are assuming we get 
\begin{equation*}
G/R=(G/R)^{0}=\pi \left( G^{0}\right) .
\end{equation*}%
Consequently, $G=G^{0}R.$ But $R$ is $\varphi $-invariant, then item 5 of
the previous proposition shows that 
\begin{equation*}
R=R^{+,0}\text{ }R^{-}=R^{-,0}\text{ }R^{+},
\end{equation*}%
where $R^{+},R^{-},R^{0}$ are the connected Lie subgroup of $R$ with Lie
algebras 
\begin{equation*}
\mathfrak{r}\cap \mathfrak{g}^{+},\text{ }\mathfrak{r}\cap \mathfrak{g}^{-},%
\text{ }\mathfrak{r}\cap \mathfrak{g}^{0}
\end{equation*}%
respectively.

Now, by using item 1 of the same proposition we conclude that 
\begin{equation*}
G=G^{+,0}\text{ }G^{-}=G^{-,0}\text{ }G^{+}
\end{equation*}%
as stated.
\end{proof}

\begin{remark}
The condition on the compacity of $G^{0}$ can be weakened. In fact, it is
enough to ask that the connected components of the Lie subgroup of $G/R$
given by the singularities of the induced linear vector field on $G/R$ are
compacts.
\end{remark}

Next we show that if $G$ has the decomposition (\ref{decomposition}), the
reachable and controllable sets have also such a decompositions.

\begin{lemma}
\label{decompositionreachable} Let us assume that $G$ has finite semisimple
center and$\mathcal{A}$ is open. If $G$ has the decomposition (\ref%
{decomposition}) then 
\begin{equation}
\mathcal{A}=\mathcal{A}_{G^{-}}G^{+,0}\; \; \; \mbox{ and }\; \; \; \mathcal{A}%
^{\ast }=\mathcal{A}_{G^{+}}^{\ast }G^{-,0},
\end{equation}%
where 
\begin{equation*}
\mathcal{A}_{G^{-}}=\mathcal{A}\cap G^{-}\text{and }\mathcal{A}%
_{G^{+}}^{\ast }=\mathcal{A}^{\ast }\cap G^{+}.
\end{equation*}
\end{lemma}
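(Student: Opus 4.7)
The plan is to prove both set equalities by combining Proposition \ref{generalcase} with the right-translation invariance encoded in Lemma \ref{pointinvariance}, and then read off the decomposition from the hypothesis $G=G^{+,0}G^-=G^{-,0}G^+$ in a form that allows us to split an arbitrary element of $\mathcal{A}$.

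First I would establish the key inclusion $\mathcal{A}\cdot G^{+,0}\subset \mathcal{A}$. By Proposition \ref{generalcase} the hypotheses on $G$ and $\mathcal{A}$ give $G^{+,0}\subset \mathcal{A}$. Since $\mathfrak{g}^{+,0}$ is $\mathcal{D}$-invariant, $G^{+,0}$ is $\varphi$-invariant, so for every $h\in G^{+,0}$ and every $t\in\mathbb{R}$ we have $\varphi_t(h)\in G^{+,0}\subset\mathcal{A}$. Lemma \ref{pointinvariance} then yields $\mathcal{A}\cdot h\subset \mathcal{A}$ for each such $h$, and therefore $\mathcal{A}\cdot G^{+,0}\subset\mathcal{A}$. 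In particular $\mathcal{A}_{G^-}G^{+,0}\subset\mathcal{A}$, which is one inclusion.

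For the reverse inclusion $\mathcal{A}\subset \mathcal{A}_{G^-}G^{+,0}$ I would first note that the hypothesis $G=G^{+,0}G^-$ implies $G=G^-G^{+,0}$ (simply invert: if $g^{-1}=hk$ with $h\in G^{+,0}$, $k\in G^-$, then $g=k^{-1}h^{-1}\in G^-\,G^{+,0}$). Given $g\in\mathcal{A}$, write $g=g^-\,g^{+,0}$ with $g^-\in G^-$, $g^{+,0}\in G^{+,0}$. Since $(g^{+,0})^{-1}\in G^{+,0}$, the preceding paragraph gives
\[
g^- \;=\; g\cdot (g^{+,0})^{-1}\; \in\; \mathcal{A}\cdot G^{+,0}\;\subset\;\mathcal{A},
\]
so $g^-\in\mathcal{A}\cap G^-=\mathcal{A}_{G^-}$, and hence $g\in \mathcal{A}_{G^-}G^{+,0}$.

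The proof of $\mathcal{A}^\ast=\mathcal{A}_{G^+}^\ast G^{-,0}$ is completely symmetric, obtained by running the same argument for the time-reversed system (whose derivation is $-\mathcal{D}$, so that the roles of $\mathfrak{g}^+$ and $\mathfrak{g}^-$ swap); explicitly, one uses $G^{-,0}\subset \mathcal{A}^\ast$ and the invariance $\mathcal{A}^\ast\cdot G^{-,0}\subset \mathcal{A}^\ast$, together with the decomposition $G=G^+\,G^{-,0}$ derived from $G=G^{-,0}G^+$ by the same inversion trick. The only non-trivial point to check carefully is that Lemma \ref{pointinvariance} does transfer to $\mathcal{A}^\ast$ via time reversal, but this is routine since $\mathcal{A}^\ast$ for $\Sigma$ is precisely the reachable set of the reversed linear system. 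Apart from this bookkeeping, no real obstacle arises; the whole argument is essentially the observation that $G^{+,0}$ acts on $\mathcal{A}$ by right translations, combined with the assumed global decomposition of $G$.
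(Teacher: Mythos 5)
Your proof is correct and follows essentially the same route as the paper: $G^{+,0}\subset\mathcal{A}$ from Proposition \ref{generalcase}, right-translation invariance $\mathcal{A}\,G^{+,0}\subset\mathcal{A}$ via Lemma \ref{pointinvariance} and the $\varphi$-invariance of $G^{+,0}$, and then splitting $x=ab$ with $a\in G^-$, $b\in G^{+,0}$ to get $a=xb^{-1}\in\mathcal{A}_{G^-}$. You merely make explicit two points the paper leaves implicit, namely passing from $G=G^{+,0}G^-$ to $G=G^-G^{+,0}$ by inversion, and the time-reversal bookkeeping for the $\mathcal{A}^\ast$ case.
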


\begin{proof}
We only show the decomposition for $\mathcal{A}$ since the other case is
analogous. By hypothesis, $G^{+,0}\subset \mathcal{A}$. The $\varphi $%
-invariance of $G^{+,0}$ implies by Lemma \ref{pointinvariance} that 
\begin{equation*}
\mathcal{A}_{G^{-}}G^{+,0}\subset \mathcal{A}G^{+,0}\subset \mathcal{A}.
\end{equation*}%
Reciprocally, let $x\in \mathcal{A}$. By decomposition (\ref{decomposition})
there are $a\in G^{-}$ and $b\in G^{+,0}$ such that $x=ab$. Moreover, 
\begin{equation*}
a=xb^{-1}\in \mathcal{A}b^{-1}\subset \mathcal{A}\Rightarrow a\in \mathcal{A}%
_{G^{-}}.
\end{equation*}%
Therefore, $\mathcal{A}=\mathcal{A}_{G^{-}}G^{+,0}$ as desired.
\end{proof}

From now we assume that $G$ has finite semisimple center. We notice that
there are many Lie groups satisfying this property. Of course, any solvable
group and any semisimple Lie group with finite center, like $\mathfrak{sl(}n,%
\mathbb{R}\mathfrak{),}$ has the finite semisimple center property. But
also, the direct or semidirect product between groups with finite semisimple
center have the same property. Furthermore, according to the Levi
Decomposition Theorem given a Lie algebra $\mathfrak{g}$ there exists a
semisimple subalgebra $\mathfrak{s}$ of $\mathfrak{g}$ such that 
\begin{equation*}
\mathfrak{g=r(g)\oplus s}
\end{equation*}%
where $\mathfrak{r(g)}$ is the solvable radical, i.,e., the largest solvable
subgroup of $G$.

In particular, if one Levi subgroup of $G$ has finite center, then $G$ has
finite semisimple center (see Theorem 4.3 of \cite{ALEB} and corollaries
therein).

Let us also assume from now that the reachable set $\mathcal{A}$ is open.
Since $0\in \mathrm{int}\Omega $, Corollary 4.5.11 of \cite{FCWK} assures
the existence of a control set $\mathcal{C}$ of the linear system (\ref%
{linearsystem}) that contains the identity element $e\in G$ in its interior.
Our aim here is to analyze the topological properties of this control set
and to understand in which cases it is in fact the only control set of (\ref%
{linearsystem}) with nonempty interior. A first result in this direction is
the following

\begin{theorem}
Assume that $G$ has finite semisimple center and the reachable set $\mathcal{%
A}$ of the linear system $\Sigma $ is open. For the existent control set $%
\mathcal{C}$ it holds that

\begin{itemize}
\item[1.] $\mathcal{C}$ is closed if and only if $\mathcal{A}^{\ast }=G$

\item[2.] $\mathcal{C}$ is open if and only if $\mathcal{A}=G$

\item[3.] Furthermore, if $G$ is nilpotent we have

\begin{itemize}
\item[i)] $\mathcal{C}$ is closed if and only if $\mathcal{D}$ has only
eigenvalues with nonpositive real part

\item[ii)] $\mathcal{C}$ is open if and only if $\mathcal{D}$ has only
eigenvalues with nonnegative real part

\item[iii)] $\mathcal{C}=G$ if and only if $\mathcal{D}$ has only
eigenvalues with zero real part
\end{itemize}
\end{itemize}
\end{theorem}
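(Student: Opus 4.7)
My plan is to exploit the formula (\ref{control}) from Proposition \ref{controlsets} evaluated at $y = e$, namely $\mathcal{C} = \mathrm{cl}(\mathcal{A}) \cap \mathcal{A}^*$, together with items 4 and 5 of the same proposition, which equate closedness (openness) of $\mathcal{C}$ with invariance in positive (negative) time and with the identities $\mathcal{C} = \mathrm{cl}(\mathcal{A}(g))$, respectively $\mathcal{C} = \mathcal{A}^*(g)$, for any $g \in \mathcal{C}$.

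For part 1, sufficiency is a direct substitution: if $\mathcal{A}^* = G$, then (\ref{control}) collapses to $\mathcal{C} = \mathrm{cl}(\mathcal{A})$, which is closed. For necessity, item 4 of Proposition \ref{controlsets} at $g = e$ forces $\mathcal{C} = \mathrm{cl}(\mathcal{A})$, so that $\mathrm{cl}(\mathcal{A}) \subset \mathcal{A}^*$. To upgrade this to $\mathcal{A}^* = G$, I would combine Proposition \ref{generalcase} (which gives $G^{+,0} \subset \mathcal{A} \subset \mathcal{A}^*$ and $G^{-,0} \subset \mathcal{A}^*$) with the time-reversed analogue of Lemma \ref{pointinvariance}: since every point of $G^{-,0}$ has $\varphi$-orbit contained in $G^{-,0} \subset \mathcal{A}^*$, one gets $\mathcal{A}^* \cdot G^{-,0} \subset \mathcal{A}^*$. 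This yields $G^{+,0} \cdot G^{-,0} \subset \mathcal{A}^*$, and the proposition immediately following Proposition \ref{subgroups} supplies the decomposition $G = G^{+,0} G^{-,0}$ in the relevant settings, giving $\mathcal{A}^* = G$. Part 2 is the time-reversed mirror, using item 5 of Proposition \ref{controlsets} and the relation (\ref{sets}), which guarantees that $\mathcal{A}$ open implies $\mathcal{A}^*$ open.

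For part 3, nilpotency of $G$ forces solvability, so (\ref{decomposition}) yields $G = G^{-,0} G^+ = G^{+,0} G^-$. Part (i) then follows from part 1: nonpositive real-part eigenvalues of $\mathcal{D}$ mean $G^+ = \{e\}$, hence $G = G^{-,0} \subset \mathcal{A}^*$; conversely, $\mathcal{A}^* = G$ combined with Lemma \ref{decompositionreachable} gives $\mathcal{A}_{G^+}^* \cdot G^{-,0} = G = G^+ \cdot G^{-,0}$, and item 2 of Proposition \ref{subgroups} ($G^{-,0} \cap G^+ = \{e\}$) makes this factorization unique, forcing $\mathcal{A}_{G^+}^* = G^+$. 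The main obstacle, and the technical heart of the proof, is to show that this equality is impossible unless $G^+ = \{e\}$: mimicking the estimate in the proof of the proposition following Proposition \ref{subgroups}, I would obtain a lower bound $\varrho(\varphi_{-t}(g), e) \geq c\, \mathrm{e}^{\mu t}\, \varrho(g, e)$ on $G^+$ from the strict positivity of the real parts of the eigenvalues of $\mathcal{D}|_{\mathfrak{g}^+}$, and combine it with the compactness of $\Omega$ to deduce that $\mathcal{A}_{G^+}^*$ is bounded in any left-invariant metric on $G^+$, contradicting $\mathcal{A}_{G^+}^* = G^+$ unless $G^+ = \{e\}$. Part (ii) is symmetric by time reversal, and (iii) follows because $\mathcal{C} = G$ is equivalent to $\mathcal{C}$ being simultaneously closed and open, which by (i) and (ii) forces the eigenvalues of $\mathcal{D}$ to have both nonpositive and nonnegative real parts, i.e., purely imaginary.
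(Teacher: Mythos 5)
Your sufficiency directions and the overall architecture (formula (\ref{control}) at $y=e$ together with items 4 and 5 of Proposition \ref{controlsets}) match the paper. The genuine gap is in the necessity direction of parts 1 and 2: after obtaining $G^{+,0}G^{-,0}\subset\mathcal{A}^{*}$ you invoke the decomposition $G=G^{+,0}G^{-,0}$ ``in the relevant settings'', but that decomposition is only established (in the proposition following Proposition \ref{subgroups}) for $G$ solvable or $G^{0}$ compact, whereas the theorem is stated for an arbitrary connected $G$ with finite semisimple center and $\mathcal{A}$ open. The paper closes this hole differently: since $\mathfrak{g}=\mathfrak{g}^{+}\oplus\mathfrak{g}^{-,0}$, the set $B=G^{+}G^{-,0}$ is a \emph{neighborhood of the identity}; it is $\varphi$-invariant, so iterating Lemma \ref{pointinvariance} (in its $\mathcal{A}^{*}$ version) gives $B^{n}\subset\mathcal{A}^{*}$ for every $n$, and connectedness of $G$ yields $G=\bigcup_{n}B^{n}\subset\mathcal{A}^{*}$. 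Without this step, or some substitute for it, your argument proves parts 1 and 2 only under the additional hypothesis that the global decomposition holds.

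In part 3 you take a genuinely different route from the paper, which simply cites Proposition 4.4 and Theorem 4.5 of \cite{DaSilva1}; your derivation of (iii) from (i) and (ii) via ``a nonempty clopen subset of a connected group is the whole group'' is clean and self-contained. However, the technical heart of your (i) --- that $\mathcal{A}^{*}_{G^{+}}=G^{+}$ forces $G^{+}=\{e\}$ --- rests on the claim that $\mathcal{A}^{*}_{G^{+}}$ is bounded, and the expansion estimate $\varrho(\varphi_{-t}(g),e)\geq c\,\mathrm{e}^{\mu t}\varrho(g,e)$ does not deliver that by itself: using (\ref{sets}) to write $g=\varphi_{-\tau}(h)$ with $h^{-1}\in\mathcal{A}_{\tau}\cap G^{+}$, one still has to show that $\mathcal{A}_{\tau}\cap G^{+}$ grows strictly more slowly than $\mathrm{e}^{\mu\tau}$, and on a nonabelian group this is not a consequence of the compactness of $\Omega$ alone. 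Note that the paper itself treats compactness of $\mathrm{cl}(\mathcal{A}^{*}_{G^{+}})$ as a \emph{hypothesis} in Theorem \ref{bounded2} and its corollary, which strongly suggests it is not automatic. As written, parts 3(i) and 3(ii) therefore also contain a gap; the citations to \cite{DaSilva1} are doing real work there.
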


\begin{proof}
1. If $\mathcal{A}^{\ast }=G$ we get 
\begin{equation*}
\mathcal{C}=\mathrm{cl}(\mathcal{A})\cap \mathcal{A}^{\ast }=\mathrm{cl}(%
\mathcal{A})
\end{equation*}%
showing that $\mathcal{C}$ is closed. Reciprocally, if $\mathcal{C}$ is
closed, by item 4. of Proposition \ref{controlsets} it follows that $%
\mathcal{C}=\mathrm{cl}(\mathcal{A}(g))$ for any $g\in G$ and in particular $%
\mathcal{C}=\mathrm{cl}(\mathcal{A})$.

By item 2. of the same Proposition we must have $\mathcal{A}\subset \mathcal{%
A}^{\ast }$. Since $G^{+}\subset \mathcal{A}$ Lemma \ref{pointinvariance}
shows that 
\begin{equation*}
B=G^{+}G^{-,0}\subset \mathcal{A}^{\ast }G^{-,0}\subset \mathcal{A}^{\ast }.
\end{equation*}%
Since $B$ is $\varphi $-invariant, Lemma \ref{pointinvariance} implies also
that $B^{n}\subset \mathcal{A}^{\ast }$. Moreover, $B$ is a neighborhood of $%
e\in G$ and since $G$ is connected we get 
\begin{equation*}
G=\bigcup_{n\in \mathbb{N}}B^{n}\subset \mathcal{A}^{\ast }\Rightarrow 
\mathcal{A}^{\ast }=G.
\end{equation*}

2. If $\mathcal{A}=G$ we obtain 
\begin{equation*}
\mathcal{C}=\mathrm{cl}(\mathcal{A})\cap \mathcal{A}^{\ast }=\mathcal{A}%
^{\ast }
\end{equation*}%
showing in particular that $\mathcal{C}$ is open. Conversely, let us assume
that $\mathcal{C}=\mathcal{A}^{\ast }(g)$ for any $g\in \mathcal{C}$. By
item 2. of Proposition \ref{controlsets} we have that $\mathrm{int}\,%
\mathcal{C}\subset \mathcal{A}(g)$ for any $g\in \mathcal{C}$. In
particular, since $\mathcal{C}=\mathcal{A}^{\ast }$ and $\mathcal{A}^{\ast }$
is open we get $\mathcal{A}^{\ast }\subset \mathcal{A}$. Moreover, since $%
G^{-}\subset \mathcal{A}^{\ast }$, $G^{+,0}\subset \mathcal{A}$ and both are 
$\varphi $-invariant, we obtain 
\begin{equation*}
B=G^{+}G^{-,0}=G^{+,0}G^{-}\subset \mathcal{A}G^{-}\subset \mathcal{A}.
\end{equation*}%
As before, this fact allows to us to conclude that $\mathcal{A}=G$ as
desired.

3. If $G$ is nilpotent, Proposition 4.4 of \cite{DaSilva1} assures that $%
\mathcal{A}^*=G$ if and only if $G=G^{-, 0}$ and $\mathcal{A}=G$ if and only
if $G=G^{+, 0}$ which proves item i) and ii). Item iii) follows from Theorem
4.5 of \cite{DaSilva1}.
\end{proof}

The above theorem implies in particular that if $\mathcal{C}$ is an
invariant control set, it is the only invariant one. In fact,

\begin{corollary}
Assume that $G$ has finite semisimple center and the reachable set $\mathcal{%
A}$ of the linear system $\Sigma $ is open. If the existent control set $%
\mathcal{C}$ is invariant, then it is the only invariant one.
\end{corollary}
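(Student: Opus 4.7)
The plan is to combine part 1 of the preceding theorem with item 4 of Proposition \ref{controlsets}. Since $\mathcal{C}$ is invariant, it is closed (the cited item makes ``closed'' and ``invariant in positive time'' equivalent for control sets with nonempty interior), so part 1 of the theorem yields $\mathcal{A}^{\ast}=G$, and the same item gives the explicit description $\mathcal{C}=\mathrm{cl}(\mathcal{A}(e))=\mathrm{cl}(\mathcal{A})$.

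The second step is to force the identity into any other invariant control set. Let $\mathcal{C}'$ be invariant and pick any $g\in\mathcal{C}'$. Because $\mathcal{A}^{\ast}=\mathcal{A}^{\ast}(e)=G$, the point $g$ lies in $\mathcal{A}^{\ast}(e)$, i.e.\ $e\in\mathcal{A}(g)$. The positive-time invariance of $\mathcal{C}'$ gives $\mathcal{A}(g)\subset\mathcal{C}'$, so $e\in\mathcal{C}'$. Applying the invariance once more at $g=e$ produces $\mathcal{A}=\mathcal{A}(e)\subset\mathcal{C}'$, and since $\mathcal{A}$ is open, $\mathcal{C}'$ automatically has nonempty interior. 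This is the one mild subtlety of the argument, since item 4 of Proposition \ref{controlsets} was formulated only for control sets with nonempty interior; once that hypothesis is restored, we may legitimately invoke it on $\mathcal{C}'$.

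The third and final step is to identify the two sets. Item 4 of Proposition \ref{controlsets} applied to $\mathcal{C}'$ with the distinguished point $e\in\mathcal{C}'$ gives $\mathcal{C}'=\mathrm{cl}(\mathcal{A}(e))=\mathrm{cl}(\mathcal{A})=\mathcal{C}$, which is exactly the desired uniqueness.

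There is no significant obstacle beyond the bookkeeping above: the heavy lifting was done in the preceding theorem, where the hypothesis ``$G$ has finite semisimple center and $\mathcal{A}$ is open'' was used via Proposition \ref{generalcase} to get $G^{\pm,0}\subset\mathcal{A}^{\pm}$ and ultimately $\mathcal{A}^{\ast}=G$. All that remains here is the observation that $\mathcal{A}^{\ast}=G$ drags the identity into every positive-time invariant set, after which item 4 of Proposition \ref{controlsets} closes the loop.
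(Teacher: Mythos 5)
Your argument is correct and follows essentially the same route as the paper: use item 4 of Proposition \ref{controlsets} to translate invariance into closedness, invoke part 1 of the preceding theorem to get $\mathcal{A}^{\ast}=G$, and then use this to force the identity $e$ into any other invariant control set. The only (cosmetic) difference is at the very end, where the paper concludes via the standard fact that two control sets with nonempty intersection coincide, while you re-apply item 4 to get the explicit identification $\mathcal{C}'=\mathrm{cl}(\mathcal{A})=\mathcal{C}$; your extra remark that $\mathcal{A}\subset\mathcal{C}'$ guarantees the nonempty-interior hypothesis for $\mathcal{C}'$ is a legitimate and welcome piece of bookkeeping.
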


\begin{proof}
Let us prove the case where $\mathcal{C}$ is invariant in positive time. By
item 4. of Proposition \ref{controlsets} that situation happens if and only
if $\mathcal{C}$ is closed and by the theorem above, if and only if $%
\mathcal{A}^{\ast }=G$ .

If $\widetilde{\mathcal{C}}$ is another closed control set, for any $g\in 
\widetilde{\mathcal{C}}\subset \mathcal{A}^{\ast }$ there are $\tau >0,u\in 
\mathcal{U}$ such that $\phi _{\tau ,u}(g)=e$. Moreover, by the invariance
of $\widetilde{\mathcal{C}}$ we must have 
\begin{equation*}
e=\phi _{\tau ,u}(g)\in \phi _{\tau ,u}(\widetilde{\mathcal{C}})\subset 
\widetilde{\mathcal{C}}.
\end{equation*}%
Showing that $e\in \mathcal{C}\cap \widetilde{\mathcal{C}}$.

Since two control sets with nonempty intersection must coincide we must have 
$\mathcal{C}=\widetilde{\mathcal{C}}.$ Then, $\mathcal{C}$ is the only
invariant one.
\end{proof}

\begin{remark}
We should notice that the above results show us how restrictive is the
condition for a linear system to have an invariant control set. A linear
system admits a positive/negative invariant control set if and only if the
whole group $G$ is controllable to/reachable from the identity. Furthermore,
in \cite{JPh} the author shows that under the LARC condition, the positive
orbit $\mathcal{A}$ of an unrestricted linear control systems is a semigroup
if and only if $\mathcal{A}$ $=G.$ Here, unrestricted means (\ref%
{linearsystem}) with $\Omega =\mathbb{R}^{m}.$
\end{remark}

\subsection{Bounded control sets}

In this section our interest is to search for conditions when our control
set $\mathcal{C}$ is bounded. As before we are assuming that $G$ is a
connected Lie group with finite semisimple center and that the linear system
on $G$ is such that the reachable set $\mathcal{A}$ is open.

Let us consider as before the sets 
\begin{equation*}
\mathcal{A}_{G^{-}}=\mathcal{A}\cap G^{-}\text{ and }\mathcal{A}%
_{G^{+}}^{\ast }=\mathcal{A}^{\ast }\cap G^{+}
\end{equation*}%
Since $\mathcal{A}_{G^{-}}$, $\mathcal{A}_{G^{+}}^{\ast }$ and $G^{0}$ are
contained in $\mathcal{A}\cap \mathcal{A}^{\ast }$, if the control set $%
\mathcal{C}$ is bounded we get 
\begin{equation*}
\mathrm{cl}(\mathcal{A}_{G^{-}}),\; \; \mathrm{cl}(\mathcal{A}_{G^{+}}^{\ast
})\; \; \mbox{ and }\; \;G^{0}\mbox{ are compact sets}.
\end{equation*}

Next we show that in some cases the compacity of such sets imply that $%
\mathcal{C}$ is bounded.

\begin{theorem}
\label{bounded2} Let us assume that $G$ is semisimple or nilpotent. If $%
\mathrm{cl}(\mathcal{A}_{G^{-}})$, $\mathrm{cl}(\mathcal{A}_{G^{+}}^{\ast })$
and $G^{0}$ are compact subsets of $G$ then $\mathcal{C}$ is bounded.
\end{theorem}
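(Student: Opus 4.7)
The semisimple case is immediate: since every derivation of a semisimple Lie algebra is inner, I have $\mathcal{D} = \mathrm{ad}(X)$ for some $X \in \mathfrak{g}$, and the proposition proved earlier in this section (inner derivation plus compact $G^0$ forces $G = G^0$) yields that $G$ is itself compact, so $\mathcal{C} \subset G$ is trivially bounded.

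For the nilpotent case, I will first set up the right decomposition and then equate two factorizations of an arbitrary $g \in \mathcal{C}$. Since $G$ is solvable, item 5 of Proposition~\ref{subgroups} gives $G = G^{+,0}G^- = G^{-,0}G^+$, so Lemma~\ref{decompositionreachable} applies:
\[
\mathcal{A} = \mathcal{A}_{G^-}G^{+,0}, \qquad \mathcal{A}^* = \mathcal{A}_{G^+}^*G^{-,0}.
\]
Setting $K := \mathrm{cl}(\mathcal{A}_{G^-})$ and $L := \mathrm{cl}(\mathcal{A}_{G^+}^*)$ (both compact by hypothesis) and using that a product of a compact and a closed set is closed in a topological group, I get $\mathrm{cl}(\mathcal{A}) \subset K \cdot G^{+,0}$ and $\mathcal{A}^* \subset L \cdot G^{-,0}$. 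Writing $G^{+,0} = G^0 G^+$ and $G^{-,0} = G^- G^0$, every $g \in \mathcal{C}$ therefore admits two simultaneous factorizations
\[
g = k \cdot b^0 \cdot b^+ = l \cdot d^- \cdot d^0
\]
with $k \in K$, $l \in L$, $b^0, d^0 \in G^0$, $b^+ \in G^+$, $d^- \in G^-$.

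Next I will show that $b^+$ and $d^-$ must lie in compact subsets. In a simply connected nilpotent group the product map $G^- \times G^0 \times G^+ \to G$ is a diffeomorphism, so the canonical $G^- G^0 G^+$-components of any element are uniquely determined and depend continuously on it. The first factorization already presents $g$ in this canonical order with $G^-$-component $k$. I will convert the second factorization into the same canonical order by commuting $l \in G^+$ past $d^- \in G^-$ via the Baker-Campbell-Hausdorff formula (which truncates to a polynomial in the nilpotent setting) and by shuffling the $G^0$-factor, using that $G^0$ normalizes $G^\pm$ (because $[\mathfrak{g}^0,\mathfrak{g}^\pm] \subset \mathfrak{g}^\pm$). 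Uniqueness of the canonical decomposition then forces an identity $k = \widetilde{d}^-(l, d^-)$ and an analogous formula for $b^+$ in terms of $(l, d^-, d^0)$ up to bounded $G^0$-conjugation. Since left-translation by $l$ is a diffeomorphism of $G$ and $G^0$ is compact, the map $d^- \mapsto \widetilde{d}^-(l, d^-)$ is proper in $d^-$ for $l$ restricted to the compact $L$; hence $k \in K$ forces $d^-$ into a compact subset of $G^-$, and then $b^+$ is automatically compact-valued. The non-simply-connected nilpotent case will follow by lifting to the universal cover and pushing the boundedness back down.

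The hard part will be carrying out the BCH book-keeping needed to verify that $d^- \mapsto \widetilde{d}^-(l, d^-)$ is proper for $l \in L$. In the Heisenberg case this map is literally the identity, but for higher nilpotency step the polynomial corrections involve mixed-weight brackets in $[\mathfrak{g}^-, \mathfrak{g}^+]$ that can land in any of the eigenspaces $\mathfrak{g}^\pm, \mathfrak{g}^0$; I must argue that these polynomial corrections cannot cancel the growth of $d^-$ as $d^- \to \infty$. Compactness of $G^0$ is essential to keep the conjugation terms bounded throughout the reordering.
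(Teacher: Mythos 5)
Your semisimple case is exactly the paper's argument: every derivation of a semisimple algebra is inner, so the proposition proved earlier in the section gives $G=G^{0}$, which is compact by hypothesis. The nilpotent case is where you diverge from the paper and where your proposal has a genuine gap. The paper argues by induction on $\dim G$: it quotients by a nontrivial connected central $\varphi$-invariant subgroup $Z^{+}$ (or $Z^{-}$, $Z^{0}$) of $(Z_G)_0$, applies the induction hypothesis to the induced linear system on $G/Z^{+}$ to obtain $\mathcal{C}\subset KZ^{+}$ with $K$ compact, and then uses the factorization $\mathcal{A}^{\ast}=\mathcal{A}_{G^{+}}^{\ast}G^{-,0}$ to bound the central fibre. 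You instead compare the two factorizations $g=kb^{0}b^{+}$ and $g=ld^{-}d^{0}$ of a point of $\mathcal{C}$ directly. The skeleton is reasonable, but the entire content of the nilpotent case sits in the step you explicitly postpone: proving that the reordering map is proper, i.e.\ that boundedness of $k$ (and of $l$, $b^{0}$, $d^{0}$) forces boundedness of $d^{-}$, equivalently of $b^{+}$. You acknowledge that the BCH corrections involve brackets landing in all three eigenspaces and that you ``must argue that these polynomial corrections cannot cancel the growth of $d^{-}$,'' but no such argument is given; as written this is an unproved claim rather than a proof. In addition, the reduction of the non--simply-connected case by ``lifting to the universal cover and pushing the boundedness back down'' does not work as stated: compactness of $G^{0}$ is not inherited by $\widetilde{G}^{0}$ (a torus lifts to a vector group), and the preimage of $\mathcal{C}$ under an infinite covering is never bounded, so there is nothing bounded upstairs to push down.

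The gap is fillable, and more cheaply than by BCH estimates. What you actually need is that the multiplication map $G^{+}\times G^{-,0}\to G$ is a homeomorphism onto $G$ (it is a smooth bijection of closed subgroups with trivial intersection and complementary Lie algebras, and its differential is invertible at every point), so that the projection $\pi^{+}:G\to G^{+}$ along $G^{-,0}$ is continuous. From $g=kb^{0}b^{+}$ and $l^{-1}g\in G^{-,0}$ one gets $b^{+}m^{-1}=(b^{0})^{-1}k^{-1}l$ for some $m\in G^{-,0}$, hence $b^{+}=\pi^{+}\bigl((b^{0})^{-1}k^{-1}l\bigr)$, which ranges over the continuous image of the compact set $G^{0}\,\mathrm{cl}(\mathcal{A}_{G^{-}})^{-1}\,\mathrm{cl}(\mathcal{A}_{G^{+}}^{\ast})$. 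Then $\mathcal{C}\subset \mathrm{cl}(\mathcal{A}_{G^{-}})\,G^{0}\,\pi^{+}\bigl(G^{0}\,\mathrm{cl}(\mathcal{A}_{G^{-}})^{-1}\,\mathrm{cl}(\mathcal{A}_{G^{+}}^{\ast})\bigr)$ is bounded, with no case distinction on simple connectedness and no induction. Until something of this sort is supplied, the nilpotent half of your proof is incomplete.
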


\begin{proof}
If $G$ is semisimple, the result follows direct from Proposition 3.2 since
in this case $G=G^{0}$. For the nilpotent case we prove by induction on the
dimension of $G$.

If $\dim G=1$ then $G$ is Abelian and by Lemma \ref{decompositionreachable}
we have that 
\begin{equation*}
\mathcal{A}\cap \mathcal{A}^{\ast }=\mathcal{A}_{G^{-}}G^{-,0}\cap \mathcal{A%
}_{G^{+}}^{\ast }G^{+,0}=\mathcal{A}_{G^{-}}G^{0}\mathcal{A}_{G^{+}}^{\ast }.
\end{equation*}%
Considering that $\mathcal{A}\cap \mathcal{A}^{\ast }$ is dense in $\mathcal{%
C}$ we get 
\begin{equation*}
\mathcal{C}\subset \mathrm{cl}(\mathcal{A}_{G^{-}})G^{0}\mathrm{cl}(\mathcal{%
A}_{G^{+}}^{\ast })
\end{equation*}%
which by our assumptions is a compact set, implying that $\mathcal{C}$ is a
bounded control set.

Let us assume that $G$ is a nilpotent Lie group with dimension $n$.

Let $Z_G$ be the center of $G$ and $(Z_G)_{0}$ its connected component of
the identity. Since $(Z_G)_{0}$ is $\varphi $-invariant and Abelian, we have
by item 5. of Proposition \ref{subgroups} that $(Z_G)_{0}=Z^{+}Z^{0}Z^{-}$.
Moreover $Z^{+}$, $Z^{0}$ and $Z^{-}$ are $\varphi $-invariant normal
subgroups of $G$. Since $G$ is nilpotent, $(Z_G)_{0}$ is nontrivial and
consequently at least one of the subgroups $Z^{+}$, $Z^{0}$ or $Z^{-}$ is
nontrivial.

Let us analyze the case when $\left \{ e\right \} \varsubsetneq Z^{+}$, the
other cases are analogous. Consider the connected nilpotent Lie group $%
H=G/Z^{+}$. By the $\varphi $-invariance of $Z^{+}$ we get an induced linear
system on $H$ (see Proposition 4 of \cite{JPh1}) that satisfy 
\begin{equation*}
\pi (\phi _{t,u}^{G}(g))=\phi _{t,u}^{H}(\pi (g))
\end{equation*}%
where $\pi :G\rightarrow H$ is the canonical projection.

The above equation gives us that $\pi (\mathcal{A})$ and $\pi (\mathcal{A}%
^{\ast })$ are the reachable and controllable sets of the identity in $H$
respectively. Considering that 
\begin{equation*}
\dim H=\dim G-\dim Z^{+}<n
\end{equation*}%
by hypothesis it follows that 
\begin{equation*}
\mathcal{C}^{H}=\mathrm{cl}(\pi (\mathcal{A}))\cap \pi (\mathcal{A}^{\ast })%
\text{ is a bounded control set.}
\end{equation*}%
Since $\pi $ is an open map, there exists a compact set $K\subset G$ such
that 
\begin{equation*}
\pi (\mathcal{C})\subset \mathcal{C}^{H}\subset \pi (K)\text{ implying that }%
\mathcal{C}\subset KZ^{+}.
\end{equation*}%
Since $G^{0}$ is compact, $G$ has the decomposition (\ref{decomposition})
and we can assume without lost of generality that 
\begin{equation*}
K=K^{+}K^{-,0},\text{ }K^{+}\text{and }K^{-,0}\text{ are compact subsets of }%
G^{+}\text{and }G^{-,0}
\end{equation*}%
respectively.

Take $x\in \mathcal{A}\cap \mathcal{A}^{\ast }.$ There are 
\begin{equation*}
k_{1}\in K^{+},k_{2}\in K^{-,0}\text{ and }z\in Z^{+};\;x=k_{1}k_{2}z.
\end{equation*}%
But $Z^{+}\subset (Z_{G})_{0}$ so we obtain $x=(zk_{1})k_{2}$. Since by
Lemma \ref{decompositionreachable} we have $\mathcal{A}^{\ast }=\mathcal{A}%
_{G^{+}}^{\ast }G^{-,0}$ and since $x\in \mathcal{A}^{\ast }$ we get that 
\begin{equation*}
zk_{1}\in \mathcal{A}_{G^{+}}^{\ast }\text{ or equivalently \ }z\in \mathcal{%
A}_{G^{+}}^{\ast }(K^{+})^{-1}.
\end{equation*}%
We started with an arbitrary element $x\in \mathcal{A}\cap \mathcal{A}^{\ast
},$ then we can conclude that 
\begin{equation*}
\mathcal{A}\cap \mathcal{A}^{\ast }\subset L=K\mathrm{cl}(\mathcal{A}%
_{G^{+}}^{\ast })(K^{+})^{-1}.
\end{equation*}%
By hypothesis $\mathrm{cl}(\mathcal{A}_{G^{+}}^{\ast })$ is compact and $%
\mathcal{A}\cap \mathcal{A}^{\ast }$ is dense in $\mathcal{C},$ thus $%
\mathcal{C}$ is contained in the compact set $L$, finishing the proof.
\end{proof}

For simply connected nilpotent Lie groups we have the following:

\begin{corollary}
Let $G$ be a nilpotent simply connected Lie group. Then $\mathcal{C}$ is
bounded if, and only if, $\mathrm{cl}(\mathcal{A}_{G^{-}})$ and $\mathrm{cl}(%
\mathcal{A}_{G^{+}}^{\ast })$ are compact subsets of $G$ and $\mathcal{D}$
is hyperbolic.
\end{corollary}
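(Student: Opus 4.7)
The plan is to reduce the statement to Theorem~\ref{bounded2} by observing that, in a simply connected nilpotent Lie group, compactness of $G^{0}$ is equivalent to hyperbolicity of $\mathcal{D}$.

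For the forward implication, the discussion preceding Theorem~\ref{bounded2} already shows that boundedness of $\mathcal{C}$ forces $\mathrm{cl}(\mathcal{A}_{G^{-}})$, $\mathrm{cl}(\mathcal{A}_{G^{+}}^{\ast})$ and $G^{0}$ to be compact, because all three sit inside $\mathcal{A}\cap\mathcal{A}^{\ast}\subset \mathrm{cl}(\mathcal{C})$. Thus only hyperbolicity of $\mathcal{D}$ remains to be established. Here I would use the classical fact that, since $G$ is simply connected and nilpotent, the exponential map $\exp:\mathfrak{g}\to G$ is a global diffeomorphism and every connected Lie subgroup is closed and, via $\exp$, diffeomorphic to a Euclidean space. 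Applied to $G^{0}=\exp(\mathfrak{g}^{0})$, this means $G^{0}\cong \mathbb{R}^{\dim \mathfrak{g}^{0}}$, and compactness of such a space forces $\dim \mathfrak{g}^{0}=0$. That is exactly the statement that $\mathcal{D}$ has no eigenvalues on the imaginary axis, i.e., $\mathcal{D}$ is hyperbolic.

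For the converse, hyperbolicity of $\mathcal{D}$ gives $\mathfrak{g}^{0}=0$, so $G^{0}=\{e\}$ is trivially compact. Combined with the standing hypotheses that $\mathrm{cl}(\mathcal{A}_{G^{-}})$ and $\mathrm{cl}(\mathcal{A}_{G^{+}}^{\ast})$ are compact, the three compactness conditions of Theorem~\ref{bounded2} are in force; since $G$ is nilpotent, that theorem yields directly that $\mathcal{C}$ is bounded.

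I do not anticipate any serious obstacle. The only ingredient beyond Theorem~\ref{bounded2} is the remark that on a simply connected nilpotent Lie group every connected Lie subgroup is diffeomorphic to a Euclidean space, so it is compact precisely when it is trivial; this is a standard consequence of the global diffeomorphism property of $\exp$ and could be invoked as folklore or cited from a standard reference.
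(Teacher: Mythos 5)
Your proposal is correct and follows essentially the same route as the paper: boundedness of $\mathcal{C}$ is reduced via Theorem~\ref{bounded2} (together with the observation preceding it that $\mathcal{A}_{G^{-}}$, $\mathcal{A}_{G^{+}}^{\ast}$ and $G^{0}$ lie in $\mathcal{A}\cap\mathcal{A}^{\ast}$) to compactness of the three sets, and then the simply connected nilpotent hypothesis is used to identify compactness of $G^{0}$ with $\mathfrak{g}^{0}=\{0\}$, i.e.\ hyperbolicity of $\mathcal{D}$. The paper's own proof is exactly this argument, stated more tersely.
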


\begin{proof}
In fact, by Theorem \ref{bounded2} $\mathcal{C}$ is bounded if and only if $%
G^0$, $\mathrm{cl}(\mathcal{A}_{G^{-}})$ and $\mathrm{cl}(\mathcal{A}%
_{G^{+}}^{\ast })$ are compact subsets. Since $G$ is simply connected its
exponential $\exp $ is a diffeomorphism which implies that $G^{0}$ is
compact if and only if $\mathfrak{g}^{0}$ is compact if and only if $%
\mathfrak{g}^{0}=\{0\}$.
\end{proof}

\subsection{Uniqueness}

In the classical Euclidean Abelian Linear Control Systems it is well known
that under the Kalman rank condition there exists just one control set with
non empty interior. We will show here that any solvable Lie group and any
group $G$ such that $G^{0}$ is a compact subgroup have the same property, i.
e., they have at most one control set. We start with,

\begin{theorem}
\label{unicity} The set $\mathcal{C}$ is the only control set of the linear
system (\ref{linearsystem}) whose interior intersects $G^{+,0}G^{-}$ and $%
G^{-,0}G^{+}$.
\end{theorem}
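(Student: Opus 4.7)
The plan is to prove $\mathcal{C}'=\mathcal{C}$ by producing a point of $\mathcal{C}'\cap\mathcal{C}$; since two control sets with nonempty intersection coincide, this forces equality. As $\mathcal{C}=\mathrm{cl}(\mathcal{A})\cap\mathcal{A}^{\ast}$, I aim to establish $\mathrm{int}(\mathcal{C}')\subset\mathcal{A}$ and $\mathrm{int}(\mathcal{C}')\subset\mathcal{A}^{\ast}$, which together give $\mathrm{int}(\mathcal{C}')\subset\mathcal{C}$. A propagation argument reduces each containment to a single seed: given $y_{0}\in\mathrm{int}(\mathcal{C}')\cap\mathcal{A}^{\ast}$, for any $y\in\mathrm{int}(\mathcal{C}')$ item~2 of Proposition~\ref{controlsets} applied at $y\in\mathcal{C}'$ yields $y_{0}\in\mathrm{int}(\mathcal{C}')\subset\mathcal{A}(y)$, so concatenating a steering $y\to y_{0}$ with $y_{0}\to e$ puts $y\in\mathcal{A}^{\ast}$; the case of $\mathcal{A}$ is analogous, concatenating $e\to z_{0}$ with $z_{0}\to y$.

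For the $\mathcal{A}^{\ast}$-seed, pick $y=ab\in\mathrm{int}(\mathcal{C}')\cap G^{-,0}G^{+}$ with $a\in G^{-,0}$ and $b\in G^{+}$. Since $a\in G^{-,0}\subset\mathcal{A}^{\ast}$ by Proposition~\ref{generalcase}, there exist $T>0$ and $v\in\mathcal{U}$ with $\phi_{T,v}(a)=e$, and the solution formula~(\ref{solutionform}) gives $\phi_{T,v}(e)=\varphi_{T}(a)^{-1}$. Hence
\[
\phi_{T,v}(y)=\varphi_{T}(a)^{-1}\varphi_{T}(a)\varphi_{T}(b)=\varphi_{T}(b)\in G^{+},
\]
so from $y$ the trajectory lands at $\varphi_{T}(b)$. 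To conclude $y\in\mathcal{A}^{\ast}$ one must verify that $\varphi_{T}(b)\in\mathcal{A}^{\ast}$; for this I would use that $\varphi_{-s}(b)\to e$ as $s\to\infty$ (because $b\in G^{+}$ and backward flow contracts $G^{+}$) together with the openness of $\mathcal{A}^{\ast}$ near $e$ to place $\varphi_{-s_{0}}(b)\in\mathcal{A}^{\ast}$ for $s_{0}$ large, and then transfer this back to $\varphi_{T}(b)$ using the decomposition $\mathcal{A}^{\ast}=\mathcal{A}^{\ast}_{G^{+}}G^{-,0}$ from Lemma~\ref{decompositionreachable} together with right-$G^{-,0}$-invariance. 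The $\mathcal{A}$-seed is produced by running the same argument on the time-reversed system $\dot{g}=-\mathcal{X}(g)+\sum_{j=1}^{m}u_{j}X^{j}(g)$: its derivation is $-\mathcal{D}$, so $G^{\pm}$ and $G^{\mp,0}$ interchange, $\mathcal{A}$ and $\mathcal{A}^{\ast}$ swap, $\mathcal{C}'$ remains a control set, and the original hypothesis $\mathrm{int}(\mathcal{C}')\cap G^{+,0}G^{-}\neq\emptyset$ becomes the input for the reversed construction.

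The main obstacle I anticipate is precisely the transfer ``$\varphi_{-s_{0}}(b)\in\mathcal{A}^{\ast}\Rightarrow\varphi_{T}(b)\in\mathcal{A}^{\ast}$''. A full forward $\varphi$-invariance of $\mathcal{A}^{\ast}$ cannot hold in general, since $\varphi$ expands $G^{+}$ while the bounded control range forces $\mathcal{A}^{\ast}\cap G^{+}$ to be a proper open subset of $G^{+}$. Under the subsection's standing hypothesis that $G$ is solvable or $G^{0}$ is compact (so the decomposition~(\ref{decomposition}) and Lemma~\ref{decompositionreachable} are available), the transfer should be handled inside $G^{+}$ using right-$G^{-,0}$-invariance of $\mathcal{A}^{\ast}$ together with the contractive backward flow on $G^{+}$; an alternative is to pre-adjust $y$ within the open set $\mathrm{int}(\mathcal{C}')\cap G^{-,0}G^{+}$ so that its $G^{+}$-factor already lies in the open neighborhood $\mathcal{A}^{\ast}_{G^{+}}\subset G^{+}$ of $e$, shifting the difficulty to showing such a choice is always possible.
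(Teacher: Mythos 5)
Your overall strategy (produce a point of $\mathrm{int}(\mathcal{C}')$ lying in $\mathcal{A}\cap\mathcal{A}^{\ast}$, then propagate over all of $\mathrm{int}(\mathcal{C}')$ by internal controllability and concatenation, and conclude from the fact that intersecting control sets coincide) is sound, and the propagation step via item~2 of Proposition~\ref{controlsets} is correct. The gap is exactly where you flag it: your construction of the $\mathcal{A}^{\ast}$-seed does not close. Writing $y=ab$ with $a\in G^{-,0}$, $b\in G^{+}$ and steering $a$ to $e$ leaves you at $\varphi_{T}(b)\in G^{+}$, and there is no reason for this point to lie in $\mathcal{A}^{\ast}$: the factor $b$ is dictated by wherever $\mathrm{int}(\mathcal{C}')$ happens to sit in $G$, and $\mathcal{A}^{\ast}_{G^{+}}$ is in general a bounded proper neighborhood of $e$ in $G^{+}$. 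Neither of your proposed repairs works. The ``transfer'' $\varphi_{-s_{0}}(b)\in\mathcal{A}^{\ast}\Rightarrow\varphi_{T}(b)\in\mathcal{A}^{\ast}$ would require forward $\varphi$-invariance of $\mathcal{A}^{\ast}$ along the expanding directions, which fails for precisely the reason you state; and right-multiplying by $G^{-,0}$ only moves you within the coset $\varphi_{T}(b)G^{-,0}$, which never meets $\varphi_{-s_{0}}(b)G^{-,0}$ unless $b$ was already suitably small. The ``pre-adjustment'' of $y$ inside $\mathrm{int}(\mathcal{C}')\cap G^{-,0}G^{+}$ likewise has no justification: that open set can be entirely disjoint from $G^{-,0}\,\mathcal{A}^{\ast}_{G^{+}}$.

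The missing idea is a shadowing argument along a periodic trajectory, which is how the paper obtains the seed. Take $x'=g'h'\in\mathrm{int}(\mathcal{C}')\cap G^{-,0}G^{+}$ with $g'\in G^{-,0}$, $h'\in G^{+}$. By controllability in $\mathrm{int}(\mathcal{C}')$ there are $\tau'>0$ and $u'$ with $\phi_{-\tau',u'}(x')=x'$, so $\phi_{-n\tau',u'}(x')=x'$ stays in $\mathrm{int}(\mathcal{C}')$ for all $n$. By (\ref{solutionform}) and left-invariance of the metric,
\begin{equation*}
\varrho\bigl(\phi_{-t,u'}(x'),\phi_{-t,u'}(g')\bigr)=\varrho\bigl(\phi_{-t,u'}(g')\varphi_{-t}(h'),\phi_{-t,u'}(g')\bigr)=\varrho\bigl(\varphi_{-t}(h'),e\bigr)\longrightarrow 0
\end{equation*}
as $t\to+\infty$, since the backward flow contracts $G^{+}$. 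Hence $\phi_{-t,u'}(g')\in\mathrm{int}(\mathcal{C}')$ for $t=n\tau'$ large; and this point lies in $\mathcal{A}^{\ast}$, because from it one reaches $g'\in G^{-,0}\subset\mathcal{A}^{\ast}$ (Proposition~\ref{generalcase}) and then $e$ by concatenation. The crucial difference from your attempt is that the contraction is applied to the \emph{error term} $\varphi_{-t}(h')$ between the given periodic orbit and the orbit of the ``good'' factor $g'$, so you never need to control the expanding factor at all. The $\mathcal{A}$-seed is obtained symmetrically from a point of $\mathrm{int}(\mathcal{C}')\cap G^{+,0}G^{-}$ using the forward flow, which contracts $G^{-}$ (your time-reversal remark amounts to the same thing). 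With both seeds in hand, either your propagation argument or the paper's closing step (a periodic orbit through $e$ and through $\mathrm{int}(\mathcal{C}')$, using item~3 of Proposition~\ref{controlsets}) finishes the proof.
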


\begin{proof}
Let $\widetilde{\mathcal{C}}$ be a control set such that its interior
intersects $G^{+, 0}G^-$ and $G^{-, 0}G^+$. Since two control sets with
nonempty intersection must coincide it is enough to show that $\mathcal{C}%
\cap \widetilde{\mathcal{C}}\neq \emptyset$. We will divide the proof in
three steps:

\textit{Step 1: If $\mathrm{int}\, \widetilde{\mathcal{C}}\cap
G^{+,0}G^{-}\neq \emptyset $ there are $\tau _{1}>0$ and $u_{1}\in \mathcal{U%
}$ such that 
\begin{equation}
a=\phi _{\tau _{1},u_{1}}\in \mathrm{int}\, \widetilde{\mathcal{C}};
\label{1}
\end{equation}%
} Let $x\in \mathrm{int}\, \widetilde{\mathcal{C}}\cap G^{+,0}G^{-}$ and let $%
g\in G^{+,0}$ and $h\in G^{-}$ such that $x=gh$. Considering that in $%
\mathrm{int}\, \widetilde{\mathcal{C}}$ controllability holds, there are $%
\tau >0$ and $u\in \mathcal{U}$ such that $\phi _{\tau ,u}(x)=x$. Let $%
\varrho $ be a left invariant Riemannian metric on $G$. Since $\varphi
_{t}(h)\rightarrow e$ as $t\rightarrow +\infty $ we have that 
\begin{equation*}
\varrho (\phi _{t,u}(x),\phi _{t,u}(g))=\varrho (\phi _{t,u}(g)\varphi
_{t}(h),\phi _{t,u}(g))=\varrho (\varphi _{t}(h),e)\overset{t\rightarrow 
\text{ }+\infty }{\rightarrow }0.
\end{equation*}%
Since $\phi _{n\tau ,u}(x)=x$, for any $n\in \mathbb{N}$, we obtain that $%
\phi _{t,u}(g)\in \mathrm{int}\, \widetilde{\mathcal{C}},$ for $t>0$ greater
enough. Moreover, $g\in G^{+,0}\subset \mathcal{A}$ which implies, by
concatenation, that 
\begin{equation*}
\phi _{\tau _{1},u_{1}}\in \mathrm{int}\, \widetilde{\mathcal{C}}\; \; \;%
\mbox{
for some }\; \; \; \tau _{1}>0,u_{1}\in \mathcal{U},
\end{equation*}%
as stated.

\textit{Step 2: If $\mathrm{int}\, \widetilde{\mathcal{C}}\cap G^{-,
0}G^+\neq \emptyset$ there are $\tau_2>0$ and $u_2\in \mathcal{U}$ such that 
\begin{equation}  \label{2}
e\in \phi_{\tau_2, u_2}(\mathrm{int}\, \widetilde{\mathcal{C}});
\end{equation}
}

Let $x^{\prime }\in \mathrm{int}\, \widetilde{\mathcal{C}}\cap G^{-,0}G^{+}$
and $g^{\prime -,0}$, $h^{\prime +}$ such that $x^{\prime }=g^{\prime
}h^{\prime }$. Again by controllability in $\mathrm{int}\, \widetilde{%
\mathcal{C}}$ there are $\tau ^{\prime }>0$, $u^{\prime }\in \mathcal{U}$
such that $\phi _{-\tau ^{\prime },u^{\prime }}(x^{\prime })=x^{\prime }$.
Since $\varphi _{-t}(h^{\prime })\rightarrow e$ as $t\rightarrow +\infty $
we have 
\begin{equation*}
\varrho (\phi _{-t,u^{\prime }}(x^{\prime }),\phi _{-t,u^{\prime
}}(g^{\prime }))=\varrho (\phi _{-t,u^{\prime }}(g^{\prime })\varphi
_{t}(h^{\prime }),\phi _{-t,u}(g^{\prime }))=\varrho (\varphi
_{-t}(h^{\prime }),e)\overset{t\rightarrow \text{ }+\infty }{\rightarrow }0.
\end{equation*}%
Again, since $\varphi _{-n\tau ^{\prime },u^{\prime }}(x^{\prime
})=x^{\prime }$ for any $n\in \mathbb{N}$ we get that $\phi _{-t,u^{\prime
}}(g^{\prime })\in \mathrm{int}\, \widetilde{\mathcal{C}},$ for $t>0$ greater
enough. Moreover, $g^{\prime -,0}\subset \mathcal{A}^{\ast }$ which gives us
by concatenation that for some $\tau _{2}>0$, $\Theta _{\tau _{2}}u_{2}\in 
\mathcal{U}$ we have 
\begin{equation*}
\phi _{-\tau _{2},\Theta _{\tau _{2}}u_{2}}\in \mathrm{int}\, \widetilde{%
\mathcal{C}}\; \; \; \Leftrightarrow \; \; \;e\in \phi _{\tau _{2},u_{2}}(\mathrm{%
int}\, \widetilde{\mathcal{C}}),
\end{equation*}

\textit{Step 3: It holds that $\mathcal{C}\cap \widetilde{\mathcal{C}}\neq
\emptyset$};

By (\ref{2}) there is $b\in \mathrm{int}\, \widetilde{\mathcal{C}}$ such
that $\phi_{\tau_2, u_2}(b)=e$. Let $a=\phi_{\tau_1, u_1}\in \mathrm{int}\,%
\widetilde{\mathcal{C}}$ given in (\ref{1}). By controllability in $\mathrm{%
int}\, \widetilde{\mathcal{C}}$ there are $\tau_3>0$ and $u_3\in \mathcal{U}$
such that $\phi_{\tau_3, u_3}(a)=b$. If we concatenate $u_1, u_2$ and $u_3$
and extend it periodically we obtain an admissible control function $u\in%
\mathcal{U}$ with period $T=\tau_1+\tau_2+\tau_3$. Moreover, $\phi_{t, u}(e)$
is a periodic solution of the linear system (\ref{linearsystem}) passing by $%
a, b\in \mathrm{int}\, \widetilde{\mathcal{C}}$. Since periodic orbits
cannot leave the interior of a control set we must have that it lies in $%
\mathcal{C}\cap \widetilde{\mathcal{C}}$ which concludes the proof.
\end{proof}

As a direct corollary we have:

\begin{corollary}
If $G$ is a solvable Lie group with compact $G^{0}$-component, then $%
\mathcal{C}$ is the only control set.
\end{corollary}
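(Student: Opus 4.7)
The plan is to reduce the corollary to a direct invocation of Theorem \ref{unicity} by showing that, under either of the hypotheses (solvable, or $G^{0}$ compact), the sets $G^{+,0}G^{-}$ and $G^{-,0}G^{+}$ exhaust all of $G$. Once this is in place, the hypothesis of Theorem \ref{unicity} on the interior of a competing control set becomes automatic.

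First, I would invoke the decomposition $G=G^{+,0}G^{-}=G^{-,0}G^{+}$ from equation (\ref{decomposition}). If $G$ is solvable, this is precisely item 5 of Proposition \ref{subgroups}. If instead $G^{0}$ is a compact subgroup, the same decomposition is guaranteed by the second part of the proposition immediately following Proposition \ref{subgroups} (where $G$ is reduced to $G^{0}R$ with $R$ solvable, and one then applies the solvable case to $R$). Either hypothesis therefore supplies the same conclusion: both $G^{+,0}G^{-}$ and $G^{-,0}G^{+}$ equal $G$.

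Now let $\widetilde{\mathcal{C}}$ be any control set of the linear system with nonempty interior. Because $G^{+,0}G^{-}=G^{-,0}G^{+}=G$, we trivially have
\begin{equation*}
\mathrm{int}(\widetilde{\mathcal{C}})\cap G^{+,0}G^{-}=\mathrm{int}(\widetilde{\mathcal{C}})\neq\emptyset
\quad\text{and}\quad
\mathrm{int}(\widetilde{\mathcal{C}})\cap G^{-,0}G^{+}=\mathrm{int}(\widetilde{\mathcal{C}})\neq\emptyset.
\end{equation*}
Theorem \ref{unicity} then forces $\widetilde{\mathcal{C}}=\mathcal{C}$, which is the desired uniqueness.

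There is no real obstacle here, since the deep work has already been done in proving the decomposition results and Theorem \ref{unicity}. The only point requiring mild care is justifying which of the two hypotheses triggers the decomposition; once the decomposition is in hand, the interior-intersection hypothesis of Theorem \ref{unicity} is vacuously satisfied and the conclusion is immediate.
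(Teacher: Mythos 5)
Your proof is correct and follows the paper's own argument exactly: both establish the decomposition $G=G^{+,0}G^{-}=G^{-,0}G^{+}$ (via Proposition \ref{subgroups} item 5 in the solvable case, and the proposition on compact $G^{0}$ otherwise), observe that the interior-intersection hypothesis of Theorem \ref{unicity} is then automatic for any control set with nonempty interior, and conclude uniqueness. No differences worth noting.
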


\begin{proof}
If $G=G^{+,0}G^{-}=G^{-,0}G^{+}$ it follows that any control set of (\ref%
{linearsystem}) with nonempty interior is such that its interior intersects $%
G^{+,0}G^{-}$ and $G^{-,0}G^{+}$. Consequently, by Proposition \ref%
{subgroups} item 5., if $G$ is a solvable Lie group or if $G^{0}$ is a
compact subgroup, then $\mathcal{C}$ is the only control set of the linear
system.
\end{proof}



\begin{thebibliography}{99}
\bibitem{VASM} \textsc{V. Ayala and L.A.B. San Martin}, \emph{%
Controllability properties of a class of control systems on Lie groups},
Lecture Notes in Control and Information Sciences 258 (2001), pp. 83-92.

\bibitem{VAJT} \textsc{Ayala, V. and J. Tirao}, \emph{Linear control systems
on Lie groups and Controllability}, Eds. G. Ferreyra et al., Amer. Math.
Soc., Providence, RI, 1999.

\bibitem{FCWK} \textsc{Colonius, F. and C. Kliemann}, \emph{The Dynamics of
Control}, Systems $\&$ Control: Foundations $\&$ Applications. Birk\"auser
Boston, Inc., Boston, MA, 2000.


\bibitem{DaSilva1} \textsc{Da Silva, A. J.}, \emph{Controllability of linear
systems on solvable Lie groups}. To appear in SIAM, 2016.

\bibitem{DaSilva} \textsc{Da Silva, A. J.}, \emph{Outer Invariance Entropy
for Linear Systems on Lie Groups}. To appear in SIAM (2014).

\bibitem{DaAy} \textsc{Da Silva, A., Ayala, V., }\emph{Controllability of
linear systems on Lie groups with finite sensimple center. }Submitted to the
SIAM Journal of Control and Optimization, December 2015.

\bibitem{JPh} \textsc{Jouan, Ph.}, \emph{Controllability of Linear Systems
on Lie group}, Journal of Dynamics and Control Systems, 17 (2011) 591-616.

\bibitem{JPhDM} \textsc{Jouan, Ph. and Dath M.}, \emph{Controllability of
Linear Systems on low dimensional Nilpotent and Solvable Lie groups},
Journal of Dynamics and Control Systems (2014).

\bibitem{JPh1} \textsc{Jouan, Ph.}, \emph{Equivalence of Control Systems
with Linear Systems on Lie Groups and Homogeneous Spaces}, ESAIM: Control
Optimization and Calculus of Variations, 16 (2010) 956-973.



\bibitem{ALEB} \textsc{Onishchik, A. L. and E. B. Vinberg}, \emph{Lie Groups
and Lie Algebras III - Structure of Lie Groups and Lie Algebras}, Berlin:
Springer (1990).

\bibitem{SM1} \textsc{San Martin, L. A. B.}, \emph{Algebras de Lie}, Second
Edition, Editora Unicamp, (2010).





\end{thebibliography}
\end{document}